\documentclass[11pt]{amsart}
\usepackage{amssymb, latexsym, mathrsfs, color, tikz, multirow,mathtools,xcolor,enumerate,caption, subcaption, graphicx,xcolor,comment,multicol}
\usepackage{graphics}
\graphicspath{ {./Diagrams/} }

\usepackage{xspace}
%\usetikzlibrary{arrows.meta}
\input{xy}
\xyoption{all}

\usepackage[colorlinks=true, pdfstartview=FitV,
 linkcolor=blue,citecolor=blue,urlcolor=blue]{hyperref}
\usetikzlibrary{shapes,snakes}

\setlength{\textwidth}{460pt} \setlength{\hoffset}{-45pt}
\usepackage{ytableau} %Young Tableau

\numberwithin{equation}{section}
\theoremstyle{plain}
\newtheorem{theorem}[equation]{Theorem}
\newtheorem{proposition}[equation]{Proposition}
\newtheorem{corollary}[equation]{Corollary}
\newtheorem{lemma}[equation]{Lemma}

\newtheorem {problem}{Problem}

\theoremstyle{definition}
\newtheorem{Def}[equation]{Definition}

\newtheorem{example}[equation]{Example}
\newtheorem{Rmk}[equation]{Remark}

\def\d{\delta}

\def\l{\lambda}

\def\cF{\mathcal{F}}

\def\cF{\mathcal{F}}

%Combinatorial map names
\DeclareMathOperator{\st}{\operatorname{\mathtt{ST}}\xspace}
\DeclareMathOperator\ts{\operatorname{\mathtt{TS}}\xspace}
\DeclareMathOperator\sd{\operatorname{\mathtt{SD}}\xspace}
\DeclareMathOperator\ds{\operatorname{\mathtt{DS}}\xspace}
\DeclareMathOperator\dt{\operatorname{\mathtt{DT}}\xspace}
\DeclareMathOperator\td{\operatorname{\mathtt{TD}}\xspace}

\newcommand{\U}{\textsf{U}}
\newcommand{\D}{\textsf{D}}

\def\Skyt{\operatorname{Skyt}}
\def\Nom{\operatorname{Nom}}
\def\Dyck{\operatorname{Dyck}}
\def\Tri{\operatorname{Tri}}
\def\Dis{\operatorname{Dis}}

\definecolor{applegreen}{rgb}{0.55,0.71,0.0}
\newcommand{\editone}[1]{\textcolor{black}{#1}}

\newcommand{\edittwo}[1]{\textcolor{black}{#1}}

\definecolor{darkpink}{RGB}{245,75,200}

%%%%%%%%

%Someone else's code online to show the input line numbers in the output file

\makeatletter

%\ifnum\inputlineno=\m@ne

%\let\showlineno\@empty

%\else

\def\showlineno{line \the\inputlineno}

%\fi

%\makeatother

%\renewcommand\LineNumber{\the\inputlineno}

\definecolor{linenogrey}{RGB}{150,150,150}

% To enable, remove commenting of this line:

%%%%%%%%
\title{Combinatorics for certain Skew Tableaux, Dyck Paths, Triangulations, and Dissections}
\keywords{skew tableaux, dyck path, triangulation, dyck path, bijection}
%\subjclass[2020]{TODO Primary
%05B35, % Combinatorial aspects of matroids and geometric lattices
%52B40; % Matroids in convex geometry
%Secondary 
%05B25} % Combinatorial aspects of finite geometries 

\author{Su Ji Hong}
\address{Department of Mathematics\\
         Yale University\\
\url{%https://www.math.unl.edu/~shong3/
}}
\email{suji.hong@yale.edu}

\author{George D. Nasr}
\thanks{Nasr is partially supported by the National Science Foundation under  grant DMS-2053243 (FRG)}
\address{Department of Mathematics\\
         University of Oregon\\
\url{https://sites.google.com/view/george-d-nasr-math}}
\email{gdnasr@uoregon.edu}

\begin{document}
\begin{abstract}
 We present combinatorial bijections and identities between certain skew Young tableaux, Dyck paths, triangulations, and dissections.  
\end{abstract}
\maketitle

\section{Introduction}

Bijections between Catalan objects are well understood. For instance, see \cite{StanleyBook}. %
There are many generalizations of these objects and the bijections between them.
In \cite{S96}, Stanley provides a bijection between certain standard Young tableaux and dissections of a polygon. 
In \cite[Proposition 2.3]{GMTW20}, the authors provide a bijection between the same tableaux and certain Dyck paths. 
Meanwhile, various papers consider a certain collection of skew Young tableaux---which may be seen as a generalization of the aforementioned tableaux---which are used to compute formulas for the ordinary and equivariant Kazhdan--Lusztig polynomial for uniform, sparse paving, and paving matroids \cite{LNRPre20,LNR20,fnv2021,GXY21,KNPV21}. \footnote{Kazhdan-Lusztig polynomials for matroids were first defined in \cite{EPW16}.}

\editone{The primary goal of this paper is to generalize the bijection in \cite[Proposition 2.3]{GMTW20}, so that it involves the skew tableaux mentioned above, while simultaneously including bijections involving certain triangulations.
As a result of these bijections}, properties about the skew tableaux will have implications for the Dyck paths and triangulation objects of interest. 
Motivated by our findings, we then find a combinatorial bijection between the dissections in \cite{S96} and our triangulations. 

In the next section, we will define relevant terminology for skew Young tableaux in subsection \ref{subsec:syt_back}, Dyck paths in subsection \ref{subsec:dyckback}, and then both dissections and triangulations in subsection \ref{subsec:distriback}. 
Then in subsection \ref{subsec:mainback}, we discuss the main results and findings of this paper in detail. 
{In sections \ref{sec:combinatorial_bijections} and \ref{sec:enumeration}, we provide the definitions for the maps involved in the main results.}
%The reader familiar with any of the previously mentioned combinatorial objects will still benefit from reading the corresponding sections, as we define some notation specific to this paper in each section.

\subsubsection*{Acknowledgements:} The authors would like to thank Kyungyong Lee for his helpful input on this paper.

\section{Background and Main Results}
\label{sec:background}
\subsection{Skew Young Tableaux and Nomincreasing Partitions}\label{subsec:syt_back}

\begin{Def}
Let $\lambda_1\geq \lambda_2\geq \cdots \geq \lambda_k$ be positive integers. We say that $\lambda=[\lambda_1,\lambda_2,\dots,\lambda_k]$ is a \textit{partition} of $n$ if $\lambda_1+\cdots +\lambda_k=n$. 
The \textit{Young diagram of shape} $\lambda$ is represented by boxes that are left justified so that the $i$th row has $\lambda_i$ boxes.
 A \textit{standard Young tableau} is achieved by filling the boxes with the numbers so that 
\begin{itemize}
\item each row strictly increases from left to right;
\item each column increases from top to bottom; and
\item if there are $n$ boxes, only the numbers 1 through $n$ are used. 
\end{itemize}
\end{Def} See Figure \ref{fig:tableaux} below for an example of a Young diagram and standard Young tableaux.

\begin{figure}[h]
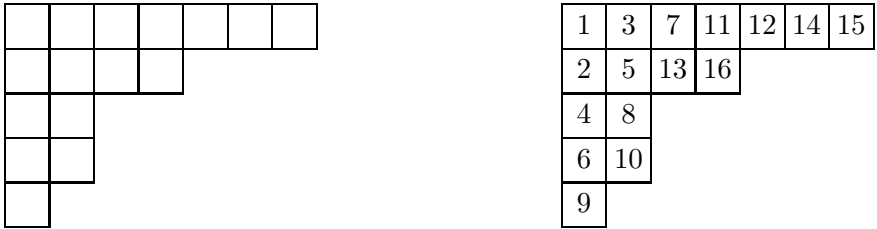

\begin{center}
\begin{minipage}{.45\textwidth}
\ydiagram{7,4,2,2,1}
\end{minipage}
\begin{minipage}{.45\textwidth}
\begin{ytableau}
       1 & 3 & 7 & 11 & 12  & 14 & 15\\
       2 & 5 & 13 & 16 \\
       4 & 8  \\
       6 & 10 \\
       9 
\end{ytableau}
\end{minipage}
\end{center}
\caption{The Young diagram and a standard Young tableaux of shape $[7,4,2,2,1]$}\label{fig:tableaux}
\end{figure}

\begin{Def}
Given partitions $\mu=[\mu_1,\dots, \mu_\ell]$ and $\lambda=[\lambda_1,\dots, \lambda_k]$ so that $\mu_i\leq \lambda_i$ for all $i$, the  \textit{skew Young diagram} $\lambda\setminus \mu$ is the set of squares from the diagram for $\lambda$ that are not in the diagram for $\mu$. 
%\suji{In the definition of skew young diagram, shouldn't $\mu_1 +\cdots \mu_i \leq \lambda_1+\cdots+\lambda_i$? \GDN{Definition updated.}}
As before, we define a \textit{skew Young tableau} to be a skew Young diagram filled with numbers following the same rules described for standard Young tableau. 
\end{Def}
See Figure \ref{fig:skew_tableaux} for an example of a skew Young tableaux.

\begin{figure}[h]
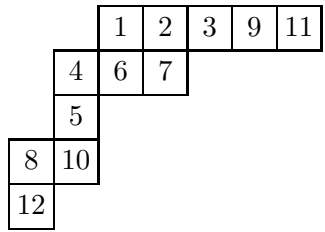

\begin{ytableau}
       \none & \none & 1 & 2& 3  & 9 & 11\\
       \none & 4 & 6 & 7 \\
       \none & 5  \\
       8 & 10 \\
       12 
\end{ytableau}
\caption{A skew Young tableaux of shape $\lambda\setminus \mu$ where $\lambda=[7,4,2,2,1]$ and $\mu=[2,1,1]$.}\label{fig:skew_tableaux}
\end{figure}

The authors in \cite{LNR20} introduce the notation $\Skyt(a,i,b)$ to denote the skew Young tableaux of shape $[(i+1)^b,1^{a-2}]/[(i-1)^{b-2}]$, where we write $x^t$ to denote $x,x,\dots, x$, where $x$ is written $t$ times. 
These are precisely the skew tableaux we discussed in the introduction.
The diagram for the tableaux in $\Skyt(a,i,b)$ is shown in Figure \ref{fig:skyt_syt_diagrams}. 

\begin{center}
\begin{figure}[h]
 \begin{tikzpicture}[scale=0.5, line width=0.9pt]
            \draw (-1,0) grid (0,6);
            \draw[decoration={brace,raise=7pt},decorate]
            (-1,0) -- node[left=7pt] {$a$} (-1,6);
            \draw (0,4) grid (4,6);
            \draw[decoration={brace,mirror, raise=4pt},decorate]
            (0.1,4) -- node[below=7pt] {$i$} (5,4);
            \draw (4,4) grid (5,9);
            \draw[decoration={brace,mirror, raise=5pt},decorate]
            (5,4) -- node[right=7pt] {$b$} (5,9); 
        \end{tikzpicture}
\caption{The diagram for $\Skyt(a,i,b)$.}\label{fig:skyt_syt_diagrams}
\end{figure}
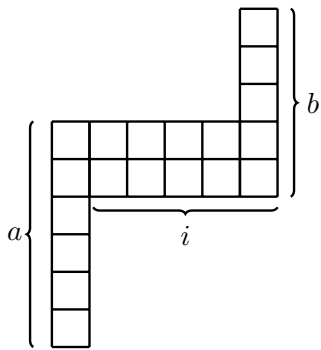
\end{center}

\subsection{Dyck Paths}\label{subsec:dyckback}

\begin{Def}
A \textit{Dyck path of semi-length $n$} is a string in $\{\U,\D\}^{2n}$ so that 
\begin{enumerate}
%\item the string starts with $\U$; 
\item the string has the same number of $\U$'s and $\D$'s (that is, $n$ of each); and
\item  the number of $\U$'s is at least the number of $\D$'s in any initial segment of the word.
\end{enumerate}
\end{Def}  

We will also often represent such a path visually using $(1,1)$ segments for $\U$ and $(-1,1)$ segments for $\D$, as in Figure \ref{fig:dyck_example}.

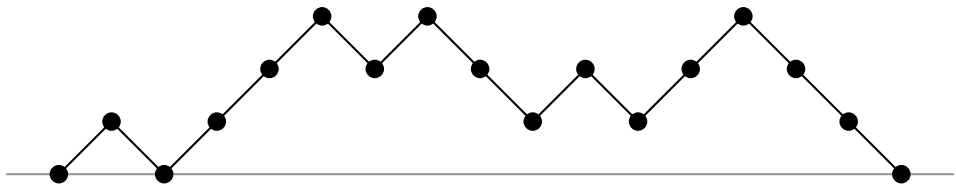
\begin{figure}[h]
\begin{center}
\begin{tikzpicture}[scale=0.7, line width=1pt]
 
\draw[color=black!40, thick] (-3,0)--(15,0); 
 
 \fill (-2,0) circle[radius=5pt];
 \draw[thick] (-2,0)--(-1,1);
 \fill (-1,1) circle[radius=5pt];
 \draw[thick] (-1,1)--(0,0);
 \fill (0,0) circle[radius=5pt];
 
 \foreach \x in {0,1,...,2}{
	\draw[thick] (\x,\x)--(\x+1,\x+1);
  	\fill (\x+1,\x+1) circle[radius=5pt];}
  	
 \draw[thick] (3,3)--(4,2);
 \fill (4,2) circle[radius=5pt];
 \draw[thick] (4,2)--(5,3);
 \fill (5,3) circle[radius=5pt];
 \draw[thick] (5,3)--(6,2);
 \fill (6,2) circle[radius=5pt];
 \draw[thick] (6,2)--(7,1);
 \fill (7,1) circle[radius=5pt];
 \draw[thick] (7,1)--(8,2);
 \fill (8,2) circle[radius=5pt];
 \draw[thick] (8,2)--(9,1);
 \fill (9,1) circle[radius=5pt];
 \draw[thick] (9,1)--(11,3);
 \fill (10,2) circle[radius=5pt];
 \fill (11,3) circle[radius=5pt];
 \draw[thick] (11,3)--(14,0);
 \fill (12,2) circle[radius=5pt];
 \fill (13,1) circle[radius=5pt];
 \fill (14,0) circle[radius=5pt];

\end{tikzpicture}
\end{center}
\caption{The visual representation of the path corresponding to $\U\D\U^3\D\U\D^2\U\D\U^2\D^3$.} \label{fig:dyck_example}
\end{figure}

\begin{Def}
A \textit{long ascent} is a maximal ascent of length at least 2. 
A \textit{singleton} is a maximal ascent of length 1. 
Let $\Dyck(n,\ell,s)$ be the Dyck paths of semi-length $n$ with $\ell$ long ascents and $s$ singletons so that no singleton appears \textit{after} the last long ascent. 
\end{Def}

Thus, the Dyck path in Figure \ref{fig:dyck_example} is an element of $\Dyck(8,2,3)$.

\subsection{Dissections and Triangulations}\label{subsec:distriback}

Throughout this section, we assume polygons with $n$ vertices have their vertices labeled $1$ through $n$ in counter-clockwise order. 

\begin{Def}
A \textit{dissection} of a polygon $P$ is a way of adding chords between non-adjacent vertices so that no two chords intersect in the interior of the polygon. 
Throughout, we let $\Dis(n,i)$ be the set of all dissections of an $n$-gon with $i$ chords. Note that $i$ in $\Dis(n,i)$ is at most $n-2$.
The elements of $\Dis(n,n-2)$ are the \emph{triangulations} of an $n$-gon. 
\end{Def}

%Given a triangulation with $n$ vertices, we number the vertices from 1 to $n$ in counter-clockwise order.  
Given a vertex $x$ in a triangulated polygon, a \textit{fan at $x$} is a maximal collection triangles all containing $x$. 
In this case, we call $x$ the \textit{origin} of the fan. 
A \textit{singular} fan is a fan with only one triangle. 
Let $e$ be a boundary edge of a fan $F$ at $x$.
% We say a fan is \textit{properly oriented relative to $e$} if, when traveling counter-clockwise along the boundary of $F$ starting at $x$, $e$ is the last edge that is reached before returning back to $x$. 

We are interested in being able to uniquely partition a triangulation into a collection of fans. This leads to the following definition. 

\begin{Def}
Let $T$ be a triangulation. A \textit{fan decomposition} is the the pair of sequences $(\mathcal{F}(T),\delta(T))$, where $\mathcal{F}(T)$ and $\delta(T)$ are defined as follows:
\begin{itemize}
\item We let $\mathcal{F}(T)$ be a sequence of fans defined recursively as follows. 
Let $F$ be the fan at the vertex with the smallest label. 
Delete this vertex and all edges incident with it in $T$ to obtain a sequence of triangulations $T_1,\dots, T_k$, arranged in counter clock-wise order so that $T_i\cap T_{i+1}$ is just vertex. 
If $T$ is just an edge, then $\mathcal{F}(T)$ is the empty sequence, and otherwise $\mathcal{F}(T)\coloneqq(F,\mathcal{F}_1,\dots, \mathcal{F}_k)$ where $\mathcal{F}_i=\mathcal{F}(T_i)$.
\item  Let $x_j$ be the label of the origin of $F_j$. We let $\delta(T)\coloneqq (d_1,\dots, d_{k-1})$, where $d_i\coloneqq x_{i+1}-x_i$ and $k$ is the number of fans in $\mathcal{F}(T)$.
\end{itemize}
\end{Def}

One can think of $d_i$ as the number of edges between the origins of $F_i$ and $F_{i+1}$ when traveling along the boundary of $T$ counter-clockwise. 

\begin{example}
Consider the triangulation $T$ in Figure \ref{fig:triangledecomp}. 
Observe that $\mathcal{F}(T)=(F_1,F_2,F_3,F_4,F_5)$ where $F_1$ is the size 1 fan at vertex 1, $F_2$ is the size 3 fan at vertex 2, $F_3$ is the size 1 fan at vertex 4, $F_4$ is the size 1 fan at vertex 5, and $F_5$ is the size 4 fan at vertex 7. Thus, $\delta(T)=(1,2,1,2)$.
Figure \ref{fig:triangledecomp} shows the five fans, distinguishing them by thick boundary edges and different shades of orange in their interior. The white vertices correspond to the origins of the fans. 

\end{example}

\begin{center}
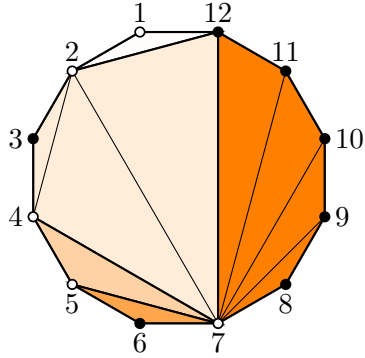
\begin{figure}[h]
\begin{tikzpicture}[scale=1]
\node[regular polygon,regular polygon sides=12,minimum size=4cm,draw,rotate=30] at (.74,.76) (e){};

\filldraw[color=orange!1] (e.corner 1)--(e.corner 2)--(e.corner 12)--(e.corner 1);
\draw[thick] (e.corner 1)--(e.corner 2)--(e.corner 12)--(e.corner 1);

\filldraw[color=orange!15] (e.corner 2)--(e.corner 12)--(e.corner 7)--(e.corner 4)--(e.corner 3)--(e.corner 2);
\draw[thick] (e.corner 2)--(e.corner 12)--(e.corner 7)--(e.corner 4)--(e.corner 3)--(e.corner 2);

\filldraw[color=orange!35] (e.corner 4)--(e.corner 5)--(e.corner 7)--(e.corner 4);
\draw[thick] (e.corner 4)--(e.corner 5)--(e.corner 7)--(e.corner 4);

\filldraw[color=orange!70] (e.corner 6)--(e.corner 5)--(e.corner 7)--(e.corner 6);
\draw[thick] (e.corner 6)--(e.corner 5)--(e.corner 7)--(e.corner 6);

\filldraw[color=orange!100] (e.corner 7)--(e.corner 8)--(e.corner 9)--(e.corner 10)--(e.corner 11)--(e.corner 12)--(e.corner 7);
\draw[thick] (e.corner 7)--(e.corner 8)--(e.corner 9)--(e.corner 10)--(e.corner 11)--(e.corner 12)--(e.corner 7);

\node[regular polygon,regular polygon sides=12,minimum size=4cm,draw,rotate=30] at (.74,.76) (e){};

\foreach \x in {1,...,12}{\node[circle,fill,inner sep=1.5pt] at (e.corner \x) {};}

\node[above] at (e.corner 1) {$1$};
\node[above] at (e.corner 2) {$2$};
\node[left] at (e.corner 3) {$3$};
\node[left] at (e.corner 4) {$4$};
\node[below] at (e.corner 5) {$5$};
\node[below] at (e.corner 6) {$6$};
\node[below] at (e.corner 7) {$7$};
\node[below] at (e.corner 8) {$8$};
\node[right] at (e.corner 9) {$9$};
\node[right] at (e.corner 10) {$10$};
\node[above] at (e.corner 11) {$11$};
\node[above] at (e.corner 12) {$12$};

{\draw (e.corner 2) -- (e.corner 12) {};}
{\draw (e.corner 2) -- (e.corner 7) {};}
{\draw (e.corner 2) -- (e.corner 4) {};}
{\draw (e.corner 4) -- (e.corner 7) {};}
{\draw (e.corner 5) -- (e.corner 7) {};}
{\draw (e.corner 6) -- (e.corner 7) {};}
{\draw (e.corner 7) -- (e.corner 10) {};}
{\draw (e.corner 7) -- (e.corner 9) {};}
{\draw (e.corner 7) -- (e.corner 11) {};}
{\draw (e.corner 7) -- (e.corner 12) {};}

\filldraw [white] (e.corner 1) circle (1.25pt);
\filldraw [white] (e.corner 2) circle (1.25pt);
\filldraw [white] (e.corner 4) circle (1.25pt);
\filldraw [white] (e.corner 5) circle (1.25pt);
\filldraw [white] (e.corner 7) circle (1.25pt);
\end{tikzpicture}
\caption{A triangulation and its partition into fans.}\label{fig:triangledecomp}
\end{figure}
\end{center}

\begin{Rmk}
Observe that a fan decomposition uniquely determines $T$. 
That is, knowing the order and size of each fan along with the distance between origins of consecutive fans uniquely determine a triangulation.
\end{Rmk}

Let $\Tri(n,t,s)$ be the triangulations $T$ of an $n$-gon so that $\mathcal{F}(T)$ has $s+t$ fans so that precisely $s$ are singular and so that the last fan is not singular.
 Thus, the triangulation in Figure \ref{fig:triangledecomp} is an element of $\Tri(12,2,3)$.

\subsection{Main Results}\label{subsec:mainback}\leavevmode \\

We now may state the main results of this paper. 
First, let us state \cite[Proposition 2.3]{GMTW20}, the result which we plan to generalize. 
We state the result by referencing the object $\Skyt(a,i,b)$ we defined above.
\begin{proposition}[\cite{GMTW20}]
\label{prop:originaldyckbijection}
The tableaux in $\Skyt(a,i,2)$ are in bijection with Dyck paths of length $2(a+2i)$ with $i+1$ peaks and no singletons.
\end{proposition}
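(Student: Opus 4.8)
The plan is to produce an explicit bijection and verify its image and invertibility directly. First I would observe that setting $b=2$ makes the inner partition $[(i-1)^0]$ empty, so $\Skyt(a,i,2)$ is simply the set of ordinary standard Young tableaux of shape $[(i+1)^2,1^{a-2}]$; this shape has $N:=2(i+1)+(a-2)=a+2i$ cells, which matches the semi-length of the target paths in $\Dyck(a+2i,i+1,0)$. The two-row block contributes $i+1$ columns, and a tail of $a-2$ cells hangs below its first column. The guiding principle is that the $i+1$ columns of the block should correspond to the $i+1$ peaks (equivalently, the $i+1$ maximal ascents): each column forces an ascent of length at least $2$ because the block is two rows tall, which is exactly the ``no singletons'' condition, while the $a-2$ tail cells account for the excess ascent length, so that the ascent lengths $u_1,\dots,u_{i+1}\ge 2$ satisfy $\sum_j (u_j-2)=a-2$ and hence $\sum_j u_j = a+2i = N$.

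Concretely, I would read the entries $1,2,\dots,N$ of $T$ in order and record, from the positions of the block entries in the two rows and of the tail entries, two compositions: an ascent composition $(u_1,\dots,u_{i+1})$ as above and a descent composition $(d_1,\dots,d_{i+1})$ with each $d_j\ge 1$ and $\sum_j d_j=N$, and then output the path $\U^{u_1}\D^{d_1}\U^{u_2}\D^{d_2}\cdots \U^{u_{i+1}}\D^{d_{i+1}}$. The real content of the map lies in how the interleaving of the row-$1$, row-$2$, and tail entries determines where each descent falls and how the tail cells are apportioned among the columns as excess ascent length; the row-strict and column-strict inequalities of $T$ are precisely what should guarantee that the step sequence stays weakly above the axis, i.e. $\sum_{l\le j}u_l\ge \sum_{l\le j}d_l$ for every $j$, so that $\Phi(T)$ is a genuine Dyck path.

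To finish, I would (i) check $\Phi(T)$ lands in the prescribed set, namely semi-length $N=a+2i$, exactly $i+1$ maximal ascents (hence $i+1$ peaks), and no singleton since every $u_j\ge 2$; (ii) construct the inverse $\Psi$, which reads the ascent/descent data of a path in $\Dyck(a+2i,i+1,0)$ and refills $[(i+1)^2,1^{a-2}]$, placing the first two up-steps of each ascent into the top two rows of a new column and the remaining up-steps into tail cells, with the descent data dictating the relative order of the entries; and (iii) verify $\Psi\circ\Phi=\mathrm{id}$ and $\Phi\circ\Psi=\mathrm{id}$. The hard part will be the interface between one inequality condition and two families of inequalities: a Dyck path is governed by a single chain of partial-sum inequalities, whereas a standard Young tableau is governed by \emph{both} its row and its column inequalities, so the delicate point is to show that the reconstruction $\Psi$ always respects the column-increasing condition running down the tail. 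I expect this to be the crux, and would handle it by tracking, for each tail cell, the ascent into which its value is inserted and showing that the induced linear order on the first column is forced to be increasing.

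As an independent consistency check, and as a fallback route to existence should the explicit inverse prove cumbersome, I would compare cardinalities: the hook-length formula computes $\#\Skyt(a,i,2)$, while the target paths are enumerated by summing, over ascent compositions $(u_1,\dots,u_{i+1})$ with each $u_j\ge 2$, the number of admissible descent compositions, a Narayana/ballot-type count. Matching these two enumerations (they agree in all small cases, giving for instance $\binom{a+1}{2}-1$ when $i=1$) confirms the bijection abstractly and provides a useful check on the explicit map.
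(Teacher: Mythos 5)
Your framework is the right one and coincides with how the paper treats this statement: Proposition \ref{prop:originaldyckbijection} is recovered as the $b=2$ specialization of the maps $\sd$ and $\ds$ of Section \ref{sec:combinatorial_bijections}. Your identification of $\Skyt(a,i,2)$ with standard tableaux of shape $[(i+1)^2,1^{a-2}]$ on $N=a+2i$ cells, of the $i+1$ columns of the two-row block with the $i+1$ maximal ascents (forcing every $u_j\ge 2$, hence no singletons), and of the $a-2$ tail cells with the excess ascent length $\sum_j(u_j-2)$, is exactly the paper's bookkeeping. The genuine gap is that the bijection itself is never defined: you write that ``the real content of the map lies in how the interleaving \dots determines where each descent falls and how the tail cells are apportioned,'' and then defer precisely that content to a later stage that the proposal never reaches. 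The paper's rule is concrete: let $x_1<\cdots<x_{i+1}$ be the first-row entries (the set $X$) and $y_j$ the entry directly below $x_j$; put $A_j=\{x_j\}\cup\bigl([y_j,y_{j'}-1]\setminus X\bigr)$, where $y_{j'}$ is the next second-row entry, with $A_{i+1}$ absorbing everything from $x_{i+1}$ onward. Then $u_j=\#A_j$ and $d_j=x_{j+1}-x_j$, with the final descent $d_{i+1}=N-x_{i+1}+x_1$.

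Once the rule is fixed, the two difficulties you flag largely dissolve, so your assessment of where the crux lies is somewhat misplaced. The Dyck condition does not require negotiating ``both families of inequalities'': since every element of $[x_1,x_k]$ must lie in $A_1\cup\cdots\cup A_k$ (an element of a later block would exceed $x_k$), one gets $\sum_{l\le k}d_l=x_{k+1}-x_1\le\sum_{l\le k}u_l$ in one line. For the inverse, the paper labels the down-steps $1,\dots,N$ from left to right, copies each peak's label onto its up-step, and fills the remaining up-steps greedily with the unused values; the resulting blocks form a nomincreasing partition with $|A_{i+1}|>1$ automatically, and the tail of the first column is just the increasing arrangement of the leftover entries, so the column-increasing condition you worry about needs no separate argument. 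As written, your proposal is an outline whose steps (i)--(iii) cannot be executed until a rule of the above kind is committed to; the enumerative cross-check is consistent (for instance $\binom{a+1}{2}-1$ when $i=1$ is correct) but does not substitute for the construction.
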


In Section \ref{sec:combinatorial_bijections}, we will provide explicit combinatorial maps which give us the following Theorem. 

\begin{theorem}\label{thm:skyt_dyck_tri}
The following objects are in bijection: 
\begin{enumerate}
\item $\Skyt(a,i,b)$;
\item $\Dyck(a+b+2i-2,i+1,b-2)$; and
\item $\Tri(a+b+2i,i+1,b-2)$.
\end{enumerate}
\end{theorem}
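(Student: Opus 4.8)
The plan is to prove the theorem by exhibiting two explicit bijections, $\Skyt(a,i,b)\to\Dyck(a+b+2i-2,i+1,b-2)$ and $\Dyck(a+b+2i-2,i+1,b-2)\to\Tri(a+b+2i,i+1,b-2)$, and then composing them to obtain $(1)\leftrightarrow(3)$. In each case I would write down the forward map and then construct its inverse directly, so that bijectivity is immediate once both maps are shown to be well defined. The bijection $(1)\leftrightarrow(2)$ is designed to restrict to the known map of Proposition \ref{prop:originaldyckbijection} when $b=2$, so the real content is handling the skew part that appears for $b>2$.

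For $(1)\leftrightarrow(2)$, I would first split the diagram of shape $[(i+1)^b,1^{a-2}]/[(i-1)^{b-2}]$ into a \emph{core}, consisting of the two full rows of length $i+1$ together with the hanging column of $a-2$ cells, and a \emph{right arm}, consisting of the $b-2$ two-cell rows sitting above columns $i$ and $i+1$. The core is exactly a shape of type $\Skyt(a,i,2)$, so Proposition \ref{prop:originaldyckbijection} turns its filling into a Dyck path of semi-length $a+2i$ with $i+1$ long ascents and no singletons. I would then read the entries occupying the right arm and use them to insert $b-2$ singletons into this path; since each two-cell row of the arm contributes a single $\U$ forming a peak, the $b-2$ rows account for exactly the missing $b-2$ units of semi-length, giving total semi-length $(a+2i)+(b-2)=a+b+2i-2$. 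The column-strictness of the tableau forces every right-arm entry to precede the entries of the last full row, which is precisely what guarantees that no singleton lands after the last long ascent; this is the one place where the defining condition of $\Dyck(\cdot,\cdot,b-2)$ is used, and checking it carefully is the main subtlety on this side.

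For $(2)\leftrightarrow(3)$, I would use the fan decomposition directly. Writing a Dyck path in the family as $\U^{e_1}\D^{f_1}\cdots\U^{e_k}\D^{f_k}$ with each $e_j,f_j\ge 1$, I would build the triangulation whose fan sequence $\cF(T)$ has $k$ fans with sizes $e_1,\dots,e_k$ and whose distance vector $\delta(T)=(d_1,\dots,d_{k-1})$ is given by $d_j=f_j$. Under this correspondence an ascent of length $1$ matches a singular fan and a long ascent matches a non-singular fan, so the $b-2$ singletons match the $b-2$ singular fans and the $i+1$ long ascents match the $i+1$ non-singular fans; the requirement that the last ascent be long matches the requirement that the last fan be non-singular. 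Because an $n$-gon has $n-2$ triangles, the identity $\sum_j e_j=a+b+2i-2$ forces $n=a+b+2i$, matching $\Tri(a+b+2i,\cdot,\cdot)$. The inverse map reads the fan sizes off as ascent lengths and the entries of $\delta(T)$ off as descent lengths, recovering the final descent from the closing of the polygon.

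The hardest step will be proving that these two maps are genuinely well defined and mutually inverse, rather than merely matching the three statistics. On the $\Skyt\to\Dyck$ side, the delicate point is that the insertion of the right-arm singletons must be order-preserving and reversible: I must show that the relative order of the arm entries among the core entries is recorded faithfully by the positions of the inserted singletons, and that the resulting word is always a genuine Dyck path with no singleton after the last long ascent. On the $\Dyck\to\Tri$ side, the key claim is that the Dyck (non-negativity) condition is equivalent to the recursive fan-decomposition procedure terminating in a valid triangulation---that is, that at every stage the partial sums of the $d_j$ keep the successive fan origins inside the part of the polygon not yet consumed. I would verify both equivalences by exhibiting the explicit inverses and checking that they return the original object, and I would corroborate the whole chain with a cardinality count of the three families as a consistency check.
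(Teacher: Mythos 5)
Your overall architecture matches the paper's: both arguments exhibit explicit maps together with their inverses, and your $\Dyck\leftrightarrow\Tri$ leg (ascent lengths $\mapsto$ fan sizes, descent lengths $\mapsto$ the distance vector $\delta(T)$, ``last ascent is long'' $\mapsto$ ``last fan is non-singular'') is exactly the paper's pair $\dt$, $\td$. Where you genuinely diverge is the $\Skyt\leftrightarrow\Dyck$ leg. The paper does not reduce to the $b=2$ case: it encodes the whole tableau in one pass as a nomincreasing partition $\Nom(\lambda)=(A_1,\dots,A_{i+b-1})$, one block per entry of the top $b-1$ rows, and sends $\lambda$ to $\U^{|A_1|}\D^{x_2-x_1}\cdots\U^{|A_{i+b-1}|}\D^{n-x_{i+b-1}+x_1}$ with $x_j=\min A_j$. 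Your plan---peel off the top $b-2$ rows as an ``arm,'' apply Proposition \ref{prop:originaldyckbijection} to the standardized core, then re-insert one singleton per arm entry---should yield the same path, and it has the virtue of making the specialization to the known $b=2$ result transparent; but it pushes the bookkeeping into the insertion step. In particular, the exact value of an arm entry (not merely which two core entries it falls between) must be recorded by where the new $\U$ splits the ambient descent, because the descent lengths of the final path are differences of consecutive block minima taken in $[a+b+2i-2]$, not in the standardized core alphabet. The paper's single-pass encoding makes that automatic.

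Two concrete points need repair before your plan is airtight. First, the arm rows are single cells, not two-cell rows: despite the displayed shape $[(i+1)^b,1^{a-2}]/[(i-1)^{b-2}]$, every figure and worked example in the paper puts one cell in each of the top $b-2$ rows (in column $i+1$), and only that convention makes the number of cells equal the claimed semi-length $a+b+2i-2$; with genuine two-cell arm rows your count $(a+2i)+(b-2)$ would be short by $b-2$. Second, the condition ``no singleton after the last long ascent'' comes from column-strictness in column $i+1$ specifically: the arm sits directly above the cell in row $b-1$, column $i+1$, whose entry is the minimum of the last block, i.e.\ the start of the last long ascent; your phrasing ``precede the entries of the last full row'' points at row $b$ rather than at that cell. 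Neither slip is fatal, and you correctly isolate the two places where real work remains (reversibility of the singleton insertion, and well-definedness of the fan-decomposition construction for an arbitrary ascent/descent profile satisfying the Dyck inequality)---the paper itself only sketches the latter.
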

\begin{proof}

%The following pairs of maps from section \ref{sec:combinatorial_bijections} are mutual inverses: 

%\suji{How about this alternate version?\GDN{accepted}}
{The maps between the three objects are defined in section \ref{sec:combinatorial_bijections}. The following pairs of maps are mutual inverses:}

\begin{itemize}
\item maps $\sd$  and $\ds$;
\item maps $\st$  and $\ts$; and
\item maps $\td$  and $\dt$.
\end{itemize}
\end{proof}

Note that this generalizes the result stated in Proposition \ref{prop:originaldyckbijection}, in addition to adding a triangulation interpretation.

With the original motivation for this paper in mind, we specialize Theorem \ref{thm:skyt_dyck_tri} to $b=2$. After incorporating the work of of \cite{S96} which provides a combinatorial bijection between $\Dis(n+2,i)$ and $\Skyt(n-i+1,i,2)$, we have the following.
\footnote{It is worth noting that this connection between $\Skyt(a,i,b)$ and dissections of polygons has resurfaced recently in the work of Kazhdan-Lusztig polynomials for Matroids \cite{EPW16}. Compare the comments in \cite[Remark 5.3]{GPY17} with the representation theoretic result \cite[Theorem 3.1]{GPY17equi} after setting $m=1$ and considering dimensions.}

%\footnote{This bijection resurfaced in some recent work of Kazhdan-Lusztig polynomials for matroids. In \cite[Remark 5.3]{GPY17} they explain that $[t^i]P_{U_{1,d}}(t)$ counts the number of ways of choosing $i$ non-intersecting chords in a $(d-i+2)$-gon. By \cite[Theorem 2]{LNRPre20}, $[t^i]P_{U_{1,d}}(t)$ also counts the cardinality of $\Skyt(2,i,d-2i+1)$ (or equivalently $\Skyt(d-2i+1,i,2)$ by symmetry as stated in Lemma \ref{lem:skew_sym}).} 

\begin{corollary}
The following objects are in bijection.
\begin{enumerate}\label{cor:disec_bij}
\item $\Dis(n+2,i)$.
\item $\Skyt(n-i+1,i,2)$.
\item $\Dyck(n+i+1,i+1,0)$. %That is, the number of Dyck paths of semilength $n+i+1$ with $i+1$ long ascents and no singleton ascents.
\item $\Tri(n+i+3,i+1,0)$. %That is, the number of triangulations of an $(n+i+3)$-gon with $i+1$ non-singleton fans.
\end{enumerate}
\end{corollary}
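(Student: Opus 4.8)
The plan is to obtain the four-way bijection as a composition of two ingredients that are already in hand: the $b=2$ specialization of Theorem~\ref{thm:skyt_dyck_tri}, and Stanley's bijection from~\cite{S96}. First I would substitute $b=2$ and $a=n-i+1$ into Theorem~\ref{thm:skyt_dyck_tri} and verify the parameter arithmetic. With $b=2$, the Dyck semi-length $a+b+2i-2$ becomes $a+2i$, the number of long ascents stays $i+1$, and the number of singletons $b-2$ becomes $0$; similarly the triangulation has $a+b+2i=a+2i+2$ vertices, $i+1$ non-singular fans, and $0$ singular fans. Setting $a=n-i+1$ then produces exactly $\Skyt(n-i+1,i,2)$, $\Dyck(n+i+1,i+1,0)$, and $\Tri(n+i+3,i+1,0)$, so items (2), (3), (4) are pairwise in bijection through the maps $\sd,\ds,\st,\ts,\td,\dt$ already established in Section~\ref{sec:combinatorial_bijections}.

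It remains to attach the dissections of item (1), and for this I would invoke the bijection of~\cite{S96} between $\Dis(n+2,i)$ and $\Skyt(n-i+1,i,2)$. Here one should record that when $b=2$ the inner partition $[(i-1)^{b-2}]=[(i-1)^0]$ is empty, so the skew shape degenerates to the straight shape $[(i+1)^2,1^{n-i-1}]$; these are precisely the standard Young tableaux appearing in Stanley's dissection bijection, and the box count $2(i+1)+(n-i-1)=n+i+1$ is consistent with the Dyck semi-length found above. Composing Stanley's bijection with the specialized maps from Theorem~\ref{thm:skyt_dyck_tri} then places all four sets in bijection, which is the claim.

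The main point requiring care is not difficulty but bookkeeping and convention-matching: one must confirm that Stanley's tableaux from~\cite{S96} coincide \emph{on the nose} with $\Skyt(n-i+1,i,2)$ — that the shape, the choice of corner, and the filling rules agree — since the corollary hinges on this identification. Everything else is a mechanical substitution and composition. As an alternative that bypasses the tableaux entirely, one could instead construct a direct bijection $\Dis(n+2,i)\to\Tri(n+i+3,i+1,0)$; the harder part of that route would be defining the map explicitly and checking that it transports the chord count $i$ to the fan statistics $(i+1,0)$, but such a direct map is not needed for the argument above.
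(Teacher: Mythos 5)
Your proposal is correct and follows exactly the paper's route: the corollary is obtained by specializing Theorem \ref{thm:skyt_dyck_tri} to $b=2$ (with $a=n-i+1$) and composing with Stanley's bijection from \cite{S96} between $\Dis(n+2,i)$ and $\Skyt(n-i+1,i,2)$. Your parameter checks and the observation that the skew shape degenerates to the straight shape $[(i+1)^2,1^{n-i-1}]$ are consistent with what the paper does.
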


By specializing the maps involved in Theorem \ref{thm:skyt_dyck_tri}, we already have combinatorial bijections between the standard Young tableaux, Dyck paths, and triangulations in this theorem, though its important to note that are bijection between the standard Young tableaux and Dyck paths is precisely the proof of Proposition \ref{prop:originaldyckbijection}. 
This leaves two pairs of objects with missing combinatorial bijections. 
In section \ref{sec:enumeration} we demonstrate a bijection between the dissections and triangulations given in this corollary. 
Using our bijection between Dyck paths and Triangulations, one can extend our work in section \ref{sec:enumeration} to give a bijection between the dissections and Dyck paths in Corollary \ref{cor:disec_bij}, but we omit this interpretation from this paper.

For our final result, we recall the following Lemma in terms of Dyck paths and triangulations. 
\begin{lemma}{\cite[Lemma 5]{LNRPre20}}\label{lem:skew_sym}
Let $a$, $i$, and $b$ be nonnegative integers. Then
\[\# \Skyt(a,i,b)=\#\Skyt(b,i,a).\]
\end{lemma}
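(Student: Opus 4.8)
The plan is to derive the lemma from Theorem~\ref{thm:skyt_dyck_tri} and then establish an equivalent palindromicity directly on one of the path or triangulation models. By the theorem,
\[
\#\Skyt(a,i,b)=\#\Dyck(N,\ell,b-2)=\#\Tri(n,\ell,b-2),
\]
and, after interchanging $a$ and $b$,
\[
\#\Skyt(b,i,a)=\#\Dyck(N,\ell,a-2)=\#\Tri(n,\ell,a-2),
\]
where $N=a+b+2i-2$, $n=a+b+2i$, and $\ell=i+1$ are all symmetric in $a$ and $b$. Setting $s=b-2$ and $s'=a-2$ we have $s+s'=a+b-4=N-2\ell$, so the lemma is equivalent to the single symmetry
\[
\#\Dyck(N,\ell,s)=\#\Dyck(N,\ell,N-2\ell-s),
\]
and, identically, to $\#\Tri(n,\ell,s)=\#\Tri(n,\ell,n-2\ell-2-s)$. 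Hence it suffices to show that, for a fixed semi-length and a fixed number of long ascents, the count of admissible Dyck paths is palindromic in the number $s$ of singletons.

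The step that makes this tractable is a reformulation in terms of standard statistics. A path in $\Dyck(N,\ell,s)$ has exactly $p=\ell+s$ maximal ascents, hence $p$ peaks, and exactly $N-p$ double rises (factors $\U\U$). The involution $s\mapsto N-2\ell-s$ fixes both $N$ and $\ell$ and sends $p=\ell+s$ to $N-\ell-s=N-p$; thus it simply exchanges the number of peaks with the number of double rises. So I would aim to construct an explicit involution on the set of Dyck paths of semi-length $N$ whose last ascent is long and which have exactly $\ell$ long ascents, interchanging peaks and double rises. On the triangulation side the same identity says that fan decompositions with a fixed number $\ell$ of non-singular fans are equidistributed under exchanging the number $s$ of singular fans with the total excess $\sum_{|F_j|\ge 2}(|F_j|-2)$ of the non-singular fans.

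I expect the construction of this involution to be the main obstacle. Because changing the number of peaks changes the number of descent runs, one cannot merely permute ascent lengths: writing $A_1,\dots,A_p$ and $B_1,\dots,B_p$ for the successive ascent and descent lengths, the descent composition must be transported at the same time, all while preserving non-negativity of the path and keeping exactly $\ell$ ascents long. My approach would be to encode a path by its interleaved sequence of peak and valley heights and to design the swap so that the excess $\sum_{\text{long}}(A_j-2)$ is traded against the number $s$ of singletons, redistributing the $B_j$ accordingly; checking that the resulting map is well defined, stays inside the class, and squares to the identity is where the real work lies.

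A less bijective alternative would bypass the involution. Conjugating the skew shapes gives
\[
\#\Skyt(a,i,b)=f^{[a+b-2,\,b^{i}]/[(b-2)^{i-1}]},\qquad \#\Skyt(b,i,a)=f^{[a+b-2,\,a^{i}]/[(a-2)^{i-1}]},
\]
where $f^{\lambda/\mu}$ denotes the number of standard Young tableaux of shape $\lambda/\mu$; each transposed shape has only $i+1$ rows. One could then apply the Aitken (Jacobi--Trudi) determinant for skew tableaux and prove the two $(i+1)\times(i+1)$ determinants agree by row and column operations. This trades the combinatorial difficulty for a determinant identity whose $a\leftrightarrow b$ symmetry is not manifest, which is why I would prefer the path or fan involution as the primary route.
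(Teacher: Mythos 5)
There is a genuine gap here, and it is a structural one: your primary route reduces the lemma to exactly the statement that this paper derives \emph{from} the lemma and then poses as an open problem. Your reduction of $\#\Skyt(a,i,b)=\#\Skyt(b,i,a)$ via Theorem~\ref{thm:skyt_dyck_tri} to the palindromicity $\#\Dyck(N,\ell,s)=\#\Dyck(N,\ell,N-2\ell-s)$ is arithmetically correct ($N$, $n$, and $\ell$ are symmetric in $a$ and $b$, and $(b-2)+(a-2)=N-2\ell$), and your observation that the involution exchanges the number of peaks with the number of double rises is a nice reformulation. But that palindromicity is precisely Corollary~\ref{cor:dyck_sym}, whose only known proof in this paper goes \emph{through} Lemma~\ref{lem:skew_sym}, and for which the paper explicitly states that no direct combinatorial bijection is known (this is the content of the Problem following Corollary~\ref{cor:dyck_sym}). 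You acknowledge that constructing the peak/double-rise-exchanging involution is ``the main obstacle'' and that verifying it ``is where the real work lies'' --- but that work is the entire content of the claim, so as written the argument is circular relative to this paper's logic and incomplete on its own terms. Your fallback via conjugate shapes and the Aitken/Jacobi--Trudi determinant is likewise only a plan: the determinant identity whose $a\leftrightarrow b$ symmetry you would need is not exhibited.

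For comparison: this paper does not prove the lemma at all; it imports it from \cite[Lemma 5]{LNRPre20} and uses it as input to obtain the Dyck-path and triangulation symmetries. A self-contained proof would more plausibly run in the opposite direction from yours --- either an explicit closed formula for $\#\Skyt(a,i,b)$ that is manifestly symmetric in $a$ and $b$, or a bijection defined directly on the tableaux (or on the nomincreasing partitions $\Nom(\lambda)$) exchanging the roles of the leg of length $a$ and the column of length $b$. If you can actually build the involution on $\Dyck(N,\ell,\cdot)$ that you describe, that would be a new result answering the paper's open problem, not merely a proof of this lemma; until then the proposal does not establish the statement.
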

One may apply Theorem \ref{thm:skyt_dyck_tri} to this Lemma in order to get the following.

\begin{corollary}\label{cor:dyck_sym}
\leavevmode 
\begin{enumerate}
\item Let $n$, $\ell$, $s$ be nonnegative integers. Then
\[\#\Dyck(n,\ell,s)=\#\Dyck(n,\ell,n-s-2\ell).\]
\item Let $n$, $t$, $s$ be nonnegative integers. Then
\[\#\Tri(n,t,s)=\#\Tri(n,t,n-t-2\ell-2).\]
\end{enumerate}

\end{corollary}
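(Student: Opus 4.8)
The plan is to derive both symmetry statements as direct consequences of Lemma~\ref{lem:skew_sym} together with the bijections of Theorem~\ref{thm:skyt_dyck_tri}. The key is to translate the equality $\#\Skyt(a,i,b)=\#\Skyt(b,i,a)$ through the parameter dictionary supplied by the theorem, matching up the triples $(a,i,b)$ and $(b,i,a)$ with the corresponding Dyck path and triangulation parameters.

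\emph{Proof of part (1).} By Theorem~\ref{thm:skyt_dyck_tri}, the object $\Skyt(a,i,b)$ is in bijection with $\Dyck(a+b+2i-2,\,i+1,\,b-2)$. Given a target triple $(n,\ell,s)$ for the Dyck side, I would solve the system
\begin{align}
n &= a+b+2i-2,\\
\ell &= i+1,\\
s &= b-2
\end{align}
for $(a,i,b)$, obtaining $i=\ell-1$, $b=s+2$, and $a=n-b-2i+2=n-s-2\ell+2$. Thus $\#\Dyck(n,\ell,s)=\#\Skyt(a,i,b)$ with these values. Applying Lemma~\ref{lem:skew_sym} gives $\#\Skyt(a,i,b)=\#\Skyt(b,i,a)$, and running the bijection of Theorem~\ref{thm:skyt_dyck_tri} forward again on the swapped triple $(b,i,a)=(s+2,\,\ell-1,\,n-s-2\ell+2)$ yields a Dyck count with semi-length $b+a+2i-2=n$, number of long ascents $i+1=\ell$, and number of singletons $a-2=n-s-2\ell$. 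Hence $\#\Dyck(n,\ell,s)=\#\Dyck(n,\ell,n-s-2\ell)$, as claimed.

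\emph{Proof of part (2).} The argument is structurally identical, now using the bijection $\Skyt(a,i,b)\leftrightarrow\Tri(a+b+2i,\,i+1,\,b-2)$. From the triangulation triple $(n,t,s)$ I solve $n=a+b+2i$, $t+? $ — here one must read off $t$ correctly from the theorem, where the second coordinate is $i+1$ and $s=b-2$. Note the statement as written contains a typographical slip: the symbol $\ell$ appears in part (2) where $t$ should appear, and the constant should be consistent with the triangulation size being $2$ larger than the Dyck semi-length. After swapping $a\leftrightarrow b$ via Lemma~\ref{lem:skew_sym}, the new singleton-count parameter becomes $a-2 = n - (b-2) - 2i - 2 = n - s - 2t - 2$, giving $\#\Tri(n,t,s)=\#\Tri(n,t,\,n-t-s-2)$; with $t=i+1$ this is $n-2(i+1)-s = n-2t-s$, so the corrected identity reads $\#\Tri(n,t,s)=\#\Tri(n,t,n-2t-s)$.

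The only genuine obstacle is bookkeeping rather than conceptual: one must invert the parameter maps of Theorem~\ref{thm:skyt_dyck_tri} carefully and confirm that swapping $a$ and $b$ in $\Skyt$ fixes the middle index $i$ (hence fixes $\ell$ and $t$) while sending the ``singleton/singular'' count to its complement relative to the total. I would verify the two resulting complement formulas are self-inverse — that is, that applying the transformation $s\mapsto n-s-2\ell$ (resp.\ $s\mapsto n-2t-s$) twice returns $s$ — as an internal consistency check, and I would correct the $\ell$-versus-$t$ discrepancy in the statement of part (2) before presenting the final identity.
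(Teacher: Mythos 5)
Your approach is exactly the paper's: the corollary is obtained by conjugating Lemma \ref{lem:skew_sym} with the parameter dictionary of Theorem \ref{thm:skyt_dyck_tri}, and your bookkeeping for part (1) is correct ($a=n-s-2\ell+2$, $b=s+2$, $i=\ell-1$, new singleton count $a-2=n-s-2\ell$). You are also right that the published statement of part (2) contains typographical slips: the symbol $\ell$ is undefined there, and the intended formula should be read with $s$ in place of the first subtracted $t$ and $t$ in place of $\ell$.

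However, your part (2) ends with an incorrect formula. From $n=a+b+2i$, $t=i+1$, $s=b-2$ one gets $a=n-s-2t$, so after swapping $a\leftrightarrow b$ the new singular-fan count is $a-2=n-s-2t-2$. You compute exactly this value mid-paragraph, but then restate it as $n-t-s-2$ and finally ``simplify'' it to $n-2t-s$, which is off by $2$. The correct identity is $\#\Tri(n,t,s)=\#\Tri(n,t,n-s-2t-2)$. Note that your proposed self-inverse check cannot detect this error, since both $s\mapsto n-s-2t-2$ and $s\mapsto n-2t-s$ are involutions; a sharper sanity check is that a triangulation in $\Tri(n,t,s)$ has $n-2$ triangles distributed over $t$ fans of size at least $2$ and $s$ fans of size $1$, so $s$ ranges over $[0,n-2t-2]$ and the complementary count must be $(n-2t-2)-s$, in parallel with the Dyck case where $s$ ranges over $[0,n-2\ell]$.
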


Although these equalities are naturally obtained with Theorem \ref{thm:skyt_dyck_tri} and Lemma \ref{lem:skew_sym}, there is no known direct combinatorial bijection describing these equalities. Hence we pose the following.

\begin{problem}
Find a direct combinatorial proof of Corollary \ref{cor:dyck_sym} which does not rely on using the skew tableaux or bijections given in this paper. 
\end{problem}

\section{Combinatorial Bijections}\label{sec:combinatorial_bijections}

The following subsections describe maps going between any two of the objects given in Theorem \ref{thm:skyt_dyck_tri}. 
For convenience, we identify maps according to where the map from and to by using \texttt{S} for skew Young tableaux, \texttt{T} for Triangulations, and \texttt{D} for Dyck paths. 
For instance, map $\st$ represents the map from skew Young tableaux to traingulations, and $\td$ represents a map from triangulations to Dyck paths. 
Examples are used to alleviate any ambiguity with our maps.

Before proceeding, however, we will point out a handy reinterpretation of the tableaux in $\Skyt(a,i,b)$.
%\begin{Def}
%Let $(A_1,\dots, A_k)$ be an ordered partition of $[n]$. Let $x_j\coloneqq\min A_j$, and if $|A_j|>1$, let $y_j\coloneqq\min (A_j\setminus \{x_j\})$. We say $(A_1,\dots, A_k)$ is  \textit{nomincreasing} $x_j<x_{j+1}$ for all $j$ and $y_j<y_{j+1}$ whenever both $y_j$ and $y_{j+1}$ exist. 
%\end{Def}
%For instance, if $A_1=\{1,3,5\}$, $A_2=\{2\}$, $A_3=\{4,6\}$, and $A_4=\{7,8\}$, then $(A_1,A_2,A_3,A_4)$ is a nomincreasing partition of $[8]$. 
%There is a natural bijection between nomincreasing partitions of $[a+b+2i-2]$ with $b+i-1$ parts, $b-2$ of which have cardinality 1, and a tableaux of shape $\Skyt(a,i,b)$ which we will readily use through out this paper which will quickly help us form bijections with these tableaux and other objects. 
Let $\lambda\in \Skyt(a,i,b)$. Let $X=\{x_1,x_2,\dots, x_{i+b-1}\}$ be the set of values in the top $b-1$ rows so that $x_1<x_2<\cdots < x_{i+b-1}$. 
If $x_j$ is in row $b-1$, define $y_j$ to be the entry in the tableau directly below $x_j$.  Then for $1\leq j<i+b-1$ let 
\[A_{j}\coloneqq\begin{cases} \{x_j\} & \text{if $x_j$ is in the first $b-2$ rows;}\\
\{x_j\}\cup\left([y_j,y_{k}-1]\setminus X\right)& \text{if $x_j$ is in row $b-1$ and $y_k$ is to the right of $y_j$,}\end{cases}\]
where $[y_j,y_k-1]=\{y_j,y_j+1,y_j+2,\dots, y_k-1\}$. Let $A_{i+b-1}\coloneqq\{x_j\}\cup \big([x_j+1,a+b+2i-2]\setminus X\big)$. Note that $x_j$ is always the minimum of $A_j$. When $|A_j|>1$, note the elements of $A_j$ are precisely the entries in row $b-1$ and $b$ in column $j$ along with all entries of column 1 which are between $y_j$ and $y_k$. See Figure \ref{fig:pushidea}.

\begin{figure}[h]
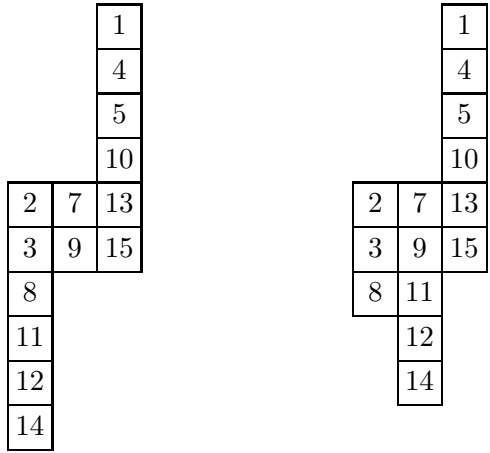

\begin{ytableau}
       \none & \none & 1\\
       \none & \none & 4 \\
       \none & \none & 5 \\
       \none & \none & 10 \\
       2 & 7 & 13 \\
       3 & 9 & 15  \\
       8  \\
       11  \\
       12 \\
       14 
\end{ytableau}
\hspace{1in}
\begin{ytableau}
       \none & \none & 1\\
       \none & \none & 4 \\
       \none & \none & 5 \\
       \none & \none & 10 \\
       2 & 7 & 13 \\
       3 & 9 & 15  \\
       8  & 11 \\
        \none & 12  \\        
        \none& 14
\end{ytableau}
\caption{We ``push" entries below row $b$ as far right as possible while maintaining the property that columns increase from top to bottom. We have $A_1=\{1\}$, $A_2=\{2,3,8\}$, $A_3=\{4\}$, $A_4=\{5\}$, $A_5=\{7,9,11,12,14\}$, $A_6=\{10\}$, $A_7=\{13,15\}$.}\label{fig:pushidea}
\end{figure}

The sequence $(A_1,\dots, A_{i+b-1})$ has enough information to reconstruct $\lambda$. 
%Observe by construction have $\min A_j=x_j<x_{j+1}=\min A_{j+1}$. Also, if $A_j$ and $A_{j+1}$ have cardinality larger than $1$, then \[\min (A_j\setminus \{x_j\})=y_j<y_{j+1}\min (A_{j+1}\setminus \{x_{j+1}\}),\]
%which is due to the fact that the rows of $\lambda$ increase from left-to-right. Thus $(A_1,\dots, A_{i+b-1})$ is a nomincreasing partition of $[a+b+2i-2]$. 
%Conversely, given a nomincreasing partition $(A_1,\dots, A_{i+b-1})$ of $[a+b+2i-2]$ with precisely $b-2$ parts of size $1$, one gets a tableaux in $\Skyt(a,i,b)$. Let $\lambda$ be a skew diagram of the desired shape. 
Starting with $j=1$, do the following. 

\begin{enumerate}
\item If $|A_j|=1$, let $x\in A_j$. Then place $x$ in the highest possible position in the last column. 

\item If $|A_j|>1$, then let $x_j=\min A_j$ and $y_j=\min( A_j\setminus \{x_j\})$. Place $x_j$ in row $b-1$ column $j$ and place $y_j$ in row $b$ column $j$. Place all remaining entries from $A_j$---in increasing order---at the top most available position(s) in the first column.

\item Increase the value of $j$ by 1. If $j<i+b-1$, repeat these steps. Otherwise, $\lambda$ is filled and we are done.
\end{enumerate}

%Note that we will always have $x_j<x_{j+1}$ and $y_j<y_{j+1}$, and thus rows in $\lambda$ increase from left-to-right. Also by construction columns increase from top-to-bottom. Since there are precisely $b-2$ of the $A_j$ that have cardinality $1$, we have that $\lambda\in \Skyt(a,i,b)$. 

%Note the processes we have described are mutual inverses, thus we have a bijection between $\Skyt(a,i,b)$ and nomincreasing partitions of $[a+b+2i-2]$ with $i+b-1$ parts, where $b-2$ have cardinality $b-2$. We will readily move between these two interpretations in the following proofs.

Let $x_j$ and $y_j$ are defined in step (2) of the preceding procedure. Pick an integer $j'$ minimally so that $j<j'$ and $|A_{j'}|>1$. Note that $(A_1,\dots, A_{i+b-1})$ is an ordered partition of $[a+b+2i-2]$ so that $x_j<x_{j+1}$ and $y_j<y_{j'}$, whenever $y_j$ and $y_{j'}$ exist. These conditions guarantee that the rows of $\lambda$ increase left-to-right. Such a sequence is called a \textit{nomincreasing partition}, as defined in \cite{GMTW20}. To this end, we define the following.
\begin{Def}
Let $\lambda\in \Skyt(a,i,b)$ and define $A_1,\dots, A_{i+b-1}$ for $\lambda$ as above. We define $\Nom(\lambda)$ to be the nomincreasing partition
\[\Nom(\lambda)\coloneqq (A_1,A_2,\dots, A_{i+b-1}).\]
\end{Def}

\begin{Rmk}
\editone{One thing that will be useful to note for future reference is that for $\Nom(\lambda)=(A_1,\dots, A_{i+b-1})$, we always have $|A_{i+b-1}|>1$. This is preceisely because with the aforementioned choice of $x_1<\cdots <x_{i+b-1}$, we always have that $x_{i+b-1}$ is the entry in row $b-1$ column $i+1$ of $\lambda$. In particular, this means given a nomincreasing sequence $(A_1,\dots, A_{i+b-1})$, even if $i+1$ of the $A_j$ satisfy $|A_j|>1$ and the remaining satisfy $|A_j|=1$, there does not necessarily exists $\lambda\in \Skyt(a,i,b)$ so that $\Nom(\lambda)=(A_1,\dots, A_{i+b-1})$. We need to additionally have $|A_{i+b-1}|>1$. Given this, though, such a $\lambda$ must exist.}
\end{Rmk}

\subsection{Map $\sd$}\label{map_sd}

In this section, we define a map $\sd$ from $\Skyt(a,i,b)$ to $\Dyck(a+b+2i-2, i+1, b-2)$. 
For simplicity, let $n\coloneqq a+b+2i-2$.
Note that given $\lambda \in \Skyt(a,i,b)$, $n$ is the number of entries in $\lambda$ and $t$ is the number of entries in the first $b-1$ rows of $\lambda$.

\begin{Def}
Let $\lambda\in \Skyt(a,i,b)$. We define $\sd(\lambda)$, a certain lattice path, as follows.

Let $\Nom(\lambda)=(A_1,\dots, A_{i+b-1})$. Let $x_j$ denote the minimum of $A_j$.
Let $a_j \coloneqq \#A_j$. 
Then let $\sd(\lambda)$ be the lattice path given by following string in $\{\U,\D\}^{2n}$: 
\begin{align}
\U^{a_1}\D^{x_2-x_1}\U^{a_2}\D^{x_3-x_2}\cdots \U^{a_{t-1}}\D^{x_{t}-x_{t-1}}\U^{a_{t}}\D^{n-x_{t}+x_1}.\label{eq:dyckstring}
\end{align}

\end{Def}

%We claim the string given in \eqref{eq:dyckstring} is in fact a Dyck path. 

\begin{lemma}
Given $\lambda$ be a tableau in $\Skyt(a,i,b)$, $\sd(\lambda)$ is a Dyck path. In particular, the lattice path $\sd(\lambda)$ is an element of $\Dyck(n, i+1, b-2)$. 

\begin{proof}
Recall that given $\lambda \in \Skyt(a,i,b)$, we can define have $\Nom(\lambda)=(A_1,\dots, A_{i+b-1})$ where $|A_{i+b-1}|>1$. Recall that $x_j=\min A_j$ is an entry in the top $b-1$ rows of $\lambda$. 
 
In the string given in \eqref{eq:dyckstring}, $\U$ corresponds to an up step and $\D$ correspond to a down step. 
Note that for any $k$, we have $[x_1,x_k]\subseteq A_1\cup \cdots \cup A_k$. 
Otherwise, there exists a $w\in[x_1,x_k]$ so that $w\in A_j$ for some $j>k$. Then we have $w\geq x_j>x_k\geq w$, a contradiction.

Thus, for $k<t$
\[\sum_{j=1}^k (x_{j+1}-x_j)=x_{k+1}-x_1\leq \sum_{j=1}^k a_j.\]

Also,
\[\sum_{j=1}^t (x_{j+1}-x_j)=x_{t}-x_1 + n-x_t+x_1 = n.\]

Moreover, there are precisely $b-2$ of the $a_j$ so that $a_j=1$, and there are $i$ of the $a_j$ so that $a_j>1$. \editone{Also, observe that $a_{i+b-1}>1$, so the last ascent in $\sd(\lambda)$ is not a singleton. }
Consequently, the constructed Dyck path is an element of $\Dyck(n,i,b-2)$.

\end{proof}
\end{lemma}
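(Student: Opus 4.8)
The plan is to check the two defining properties of a Dyck path directly from the word $P \coloneqq \sd(\lambda)$ in \eqref{eq:dyckstring}, and then to read the statistics $(\ell,s)=(i+1,b-2)$ off its block structure. Write $t \coloneqq i+b-1$, $\Nom(\lambda)=(A_1,\dots,A_t)$, $x_j \coloneqq \min A_j$, and $a_j \coloneqq \# A_j$; recall that $x_1<\cdots<x_t$ and that $(A_1,\dots,A_t)$ is an ordered set partition of $[n]$, whence $x_1=1$. First I would confirm $P\in\{\U,\D\}^{2n}$: it contains $\sum_{j=1}^t a_j = n$ up-steps because the $A_j$ partition $[n]$, and its down-steps telescope to $\sum_{j=1}^{t-1}(x_{j+1}-x_j)+(n-x_t+x_1)=(x_t-x_1)+(n-x_t+x_1)=n$. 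So $P$ has $n$ up-steps and $n$ down-steps.

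Next I would establish the ballot (prefix) condition. Because the lattice height increases across every $\U$-block and decreases only within a $\D$-block, the height attains its minimum at the end of a maximal run of $\D$'s (or trivially at the start), so it suffices to show the partial path is nonnegative after each descent $\D^{x_{k+1}-x_k}$ with $k<t$ (the terminal descent returns the path to height $0$ by the count above). After that descent, $P$ has used $\sum_{j\le k}a_j$ up-steps and $x_{k+1}-x_1$ down-steps, so I must verify
\[ x_{k+1}-x_1 \;\le\; \sum_{j=1}^{k} a_j. \]
The key observation is that $\{1,2,\dots,x_{k+1}-1\}\subseteq A_1\cup\cdots\cup A_k$: if an integer $w<x_{k+1}$ lay in some $A_m$ with $m>k$, then $w\ge \min A_m = x_m\ge x_{k+1}>w$, a contradiction, so $m\le k$. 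Since $x_1=1$, this set has $x_{k+1}-x_1$ elements, all among the $\sum_{j\le k}a_j$ elements of $A_1\cup\cdots\cup A_k$, which gives the inequality. This containment---and pinning the index $x_{k+1}$ down correctly rather than $x_k$---is the step I expect to be the main obstacle; the rest is bookkeeping.

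Finally, to land $P$ in $\Dyck(n,i+1,b-2)$ I would examine its maximal ascents. The blocks $\U^{a_1},\dots,\U^{a_t}$ are separated by descents of length $x_{j+1}-x_j\ge 1$, and the terminal descent has length $n-x_t+x_1\ge 1$ (in fact $x_t<n$ since $\#A_t>1$), so the maximal ascents of $P$ are precisely these $t$ blocks, with lengths $a_1,\dots,a_t$. Such an ascent is long exactly when $\#A_j>1$, which by the definition of $\Nom(\lambda)$ occurs for the $i+1$ indices whose $x_j$ lies in row $b-1$ (each then carrying the entry $y_j$ directly below it), and it is a singleton exactly for the remaining $t-(i+1)=b-2$ indices, whose $x_j$ lies in one of the first $b-2$ rows. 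This yields $i+1$ long ascents and $b-2$ singletons. For the last requirement, the preceding Remark gives $\#A_t=\#A_{i+b-1}>1$, so the final maximal ascent $\U^{a_t}$ is long; being the last ascent of $P$, no singleton can follow it. Assembling these facts exhibits $P$ as an element of $\Dyck(n,i+1,b-2)$.
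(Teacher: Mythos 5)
Your proposal is correct and follows essentially the same route as the paper: count $n$ up-steps and $n$ telescoping down-steps, prove the prefix condition via the containment $\{x_1,\dots,x_{k+1}-1\}\subseteq A_1\cup\cdots\cup A_k$, and read off $b-2$ singletons, $i+1$ long ascents, and the non-singleton final ascent from the block structure of $\Nom(\lambda)$. If anything you are slightly more careful than the paper, which states the containment with the index $x_k$ rather than $x_{k+1}$ (and has a typo writing $i$ for $i+1$ long ascents); your version gives exactly the inequality needed.
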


\begin{example}
For the skew Young tableau $\lambda$ in $\Skyt(7,2,6)$ below, $x_1=1, x_2=2, x_3=4,x_4=5,x_5=7 , x_6=10,$ and $x_7=12$. 
As $x_2,$ $x_5$ and $x_7$ are the entries in row 5, $y_i$ is defined for $i=2,5,7$. We have $y_2=3$, $y_5=11$, and $y_7=13$. 
Thus, $A_1 =\{1\}, A_2 =\{2,3,6,8,9\}, A_3 =\{4\}, A_4 =\{5\}, A_5 =\{7,11\}, A_6 =\{10\}$, and $A_7 =\{12,13,14,15\}$.
\begin{center}

\begin{minipage}{.49\textwidth}

\begin{center}
\begin{tikzpicture}[scale=0.5, line width=1pt]
 \draw (0,0) grid (1,-7);
 \draw (1,0) grid (2,-2);
 \draw (2,0) grid (3,-2);
 \draw (2,4) grid (3,-2);

\draw (.5,-.5) node {$2$}; 
\draw (.5,-1.5) node {$3$}; 
\draw (.5,-2.5) node {$6$}; 
\draw (.5,-4.5) node {$9$}; 
\draw (.5,-3.5) node {$8$}; 
\draw (1.5,-.5) node {$7$};
\draw (1.5,-1.5) node {$11$}; 
\draw (.5,-5.5) node {$14$}; 
\draw (.5,-6.5) node {$15$}; 
\draw (2.5,-.5) node {$12$}; 
\draw (2.5,-1.5) node {$13$}; 

\draw (2.5,3.5) node {$1$}; 
\draw (2.5,2.5) node {$4$}; 
\draw (2.5,1.5) node {$5$}; 
\draw (2.5,.5) node {$10$};
 
\end{tikzpicture}
\end{center}
\end{minipage}
\end{center}
Thus $\sd(\lambda)$ is the Dyck path given by \[\U\D\U^3\D^2{{}\U\D\U\D}\D\U^4\D^3{{}\U\D}\D\U^4\D^4.\]

\end{example}

%Note that \[\{m_1,m_1+1,\dots, m_{j}-1\}\subseteq S_1\cup \cdots \cup S_{j-1}\cup N_2\cup \cdots \cup N_j,\] so in particular the above path is indeed a Dyck path. Observe each of the $\U\D$ in the $(\U\D)^{n_j}$ correspond to one of the entries in the top $b-2$ positions of the right column in our original tableau. Also note that the number of long ascents in precisely $i+1$ and the number of singletons in $b-2$. Let $x$ be such an entry. If needed, shift the copy of $\U\D$ corresponding to $x$ to the left to guarantee its $\D$ is the $x$th to appear when counting the $\D$s from left to right in the above string. Despite shifting, note that this Dyck path will not have a singleton after the last long ascent as $s_{i+1}\geq 2$. Thus, the resulting Dyck path is an element of $\Dyck(a+b+2i-2,i+1,b-2)$.

%For example, before shifting, the above tableau gives the string 
%\[\U\D\U^3\D^3{\color{red}\U\D\U\D}\U^4\D^4{\color{red}\U\D}\U^4\D^4,\]
%where singletons are colored red.

%After shifting the singletons, we get

\subsection{Map $\ds$} \label{map_ds}
\editone{This subsection gives a map $\ds$ from $\Dyck(n,\ell,s)$ to $\Skyt(n-s-2\ell-2,\ell-1,s+2)$, which is the inverse of $\sd.$} The reader can verify that they are indeed inverses.
\begin{Def}
\editone{Let $P$ be a Dyck path in $\Dyck(n,\ell,s)$. We define $\ds(P)$, a skew tableau, as follows.}  

Given $P$, label the down-steps, left to right, in increasing order, from 1 to $n$. 
Next, use the label on the down-step at each peak as the label for the up-step at the same peak. 
\editone{Going through the ascents from left to right, greedily label the unlabeled up-steps from top to bottom using numbers from $[a+2i+b-2]$ not already appearing on any up-step.}

\editone{Let $A_j$ be the labels appearing on the $j$th ascent. Now construct $\ds(P)$ so that $\Nom\big(\ds(P)\big)=(A_1,\dots, A_{\ell+s})$.}
%Now construct a tableau diagram $\lambda$ of shape $(2^i,1^{n-s-2l})$.  Now insert the smallest entry of $S_j$ in row 1 column $j$ in $\lambda$. Place the other entry of $S_j$ in row 2 column $j$ of $\lambda$. Place the entries of $S$ in increasing order---from top to bottom---in the unfilled cells in the first column. Then attach a diagram of shape $(1^{\#N})$ to the top right cell of $\lambda$ and fill it with the entries of $N$ in increasing order, again from top to bottom. 

\end{Def}

\begin{lemma}
Given a Dyck path $P$ in $\Dyck(n,\ell,s)$, the tableau $\ds(P)$ is a skew Young tableau in $\Skyt(n-s-2\ell-2,\ell-1,\editone{s+2})$. 
\begin{proof}
\editone{There are $\ell+s$ ascents in $P$. Thus, precisely $\ell$ of the $A_j$ satisfy $|A_j|>1$, and precisely $s$ of the $A_j$ satisfy $|A_j|=1$.  
Next, notice that $(A_1,\dots, A_{\ell+s})$ is a nomincreasing sequence due to the greedy labeling of up steps of $P$. Also note that $|A_{s+\ell}|>1$ since the last ascent in  $P$ is not a singleton.
%We also have that the first column must have $n-s-2\ell-2$ entries. 
%Finally, note that the construction of the $A_j$ from section \ref{map_sd} guarantees that the entries increase left-to-right and top-to-bottom. 
Since there are $n$ up steps in $P$, we have $\ds(P)\in \Skyt(n-s-2\ell-2,\ell-1,s+2)$. }

\end{proof}
\end{lemma}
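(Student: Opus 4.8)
The plan is to verify directly that the ordered set system $(A_1,\dots,A_{\ell+s})$ produced by $\ds$ is a nomincreasing partition of $[n]$, with exactly $\ell$ parts of size greater than one and $s$ singleton parts, and then to invoke the Remark following the definition of $\Nom$: a nomincreasing sequence with the correct number of large and singleton parts and with a large last part is exactly $\Nom(\lambda)$ for a unique tableau $\lambda$ in the corresponding $\Skyt$ class. Once this is established, $\ds(P)=\lambda$ is automatically a genuine skew tableau, and its three parameters are pinned down by a cell count.

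First I would record the easy structural facts. Write $p_j$ for the label that ascent $j$ receives at its peak, i.e. the label of the first down-step following ascent $j$. Since the down-steps are numbered $1,\dots,n$ from left to right and the first down-step after ascent $j$ precedes the first down-step after ascent $j+1$, we get $p_1<p_2<\cdots<p_{\ell+s}$, and in fact $p_1=1$. Each of the $n$ up-steps receives exactly one label (its peak label or a greedy label) and these labels exhaust $[n]$ bijectively, so $(A_1,\dots,A_{\ell+s})$ is an ordered partition of $[n]$ with $|A_j|$ equal to the length of ascent $j$. Hence exactly $\ell$ parts have size $>1$ (the long ascents) and exactly $s$ parts are singletons, while the ``no singleton after the last long ascent'' hypothesis forces the last ascent to be long, i.e. $|A_{\ell+s}|>1$.

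The heart of the argument --- and the step I expect to be the main obstacle --- is to show $\min A_j=p_j$ for every $j$, so that the minima $x_j=p_j$ strictly increase. This is where the Dyck (ballot) condition enters. At the instant just before the first up-step of ascent $j$ the path has taken $U_{j-1}:=a_1+\cdots+a_{j-1}$ up-steps and, since descents $1,\dots,j-1$ carry exactly the labels $1,\dots,p_j-1$, it has taken $p_j-1$ down-steps; the Dyck condition yields $U_{j-1}\ge p_j-1$. Among the labels $1,\dots,p_j-1$ precisely $j-1$ are peak labels (namely $p_1,\dots,p_{j-1}$), leaving $p_j-j$ non-peak labels below $p_j$, whereas ascents $1,\dots,j-1$ offer $U_{j-1}-(j-1)\ge p_j-j$ greedy slots. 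Because the greedy labelling fills ascents left to right always taking the smallest unused label, these earlier ascents absorb every non-peak label below $p_j$, so every greedy label landing in ascent $j$ exceeds $p_j$; thus $\min A_j=p_j$. The same left-to-right, smallest-first behaviour makes the greedy labels of an earlier ascent strictly smaller than those of a later one (singletons in between contribute no greedy labels), which gives $y_j<y_{j'}$ for consecutive large parts. Together these two facts establish that $(A_1,\dots,A_{\ell+s})$ is nomincreasing.

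Finally I would conclude via the Remark. The sequence $(A_1,\dots,A_{\ell+s})$ is nomincreasing, partitions $[n]$, has $\ell$ large parts, $s$ singletons, and satisfies $|A_{\ell+s}|>1$, so it equals $\Nom(\lambda)$ for a unique $\lambda\in\Skyt(a,i,b)$ with $i+b-1=\ell+s$, $i+1=\ell$, and $b-2=s$; hence $i=\ell-1$ and $b=s+2$. The cell count $a+b+2i-2=n$ then forces $a=n-b-2i+2=n-s-2\ell+2$, giving $\ds(P)=\lambda\in\Skyt(n-s-2\ell+2,\ \ell-1,\ s+2)$. I note that this bookkeeping yields $a=n-s-2\ell+2$, so the first argument in the displayed statement should read $n-s-2\ell+2$ rather than $n-s-2\ell-2$.
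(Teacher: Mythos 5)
Your proof is correct and follows the same route as the paper's --- verify that $(A_1,\dots,A_{\ell+s})$ is a nomincreasing partition of $[n]$ with $\ell$ large parts, $s$ singletons, and a large last part, then invoke the Remark --- while supplying the ballot-condition argument for $\min A_j = p_j$ that the paper compresses into ``due to the greedy labeling.'' You are also right that the first parameter should be $n-s-2\ell+2$: the paper's $n-s-2\ell-2$ is an arithmetic slip, as its own example sends a path in $\Dyck(15,3,4)$ to a tableau in $\Skyt(7,2,6)$ and $15-4-6+2=7$.
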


\begin{example}
Given a Dyck path in $\Dyck(15,3,4)$, we first label the down-steps, left to right, in increasing order. 
\begin{center}
\begin{tikzpicture}[scale=0.5, line width=1pt]
 
\draw[color=black!40, thick] (-3,0)--(29,0); 
 
 \fill (-2,0) circle[radius=5pt];
 \draw[thick] (-2,0)--(-1,1);
 \fill (-1,1) circle[radius=5pt];
 \draw[thick] (-1,1)--(0,0);
 \fill (0,0) circle[radius=5pt];
 
 \foreach \x in {0,1,...,2}{
	\draw[thick] (\x,\x)--(\x+1,\x+1);
  	\fill (\x+1,\x+1) circle[radius=5pt];}
  	
 \draw[thick] (3,3)--(4,2);
 \fill (4,2) circle[radius=5pt];
 \draw[thick] (4,2)--(5,1);
 \fill (5,1) circle[radius=5pt];
 \draw[thick] (5,1)--(6,2);
 \fill (6,2) circle[radius=5pt];
 \draw[thick] (6,2)--(7,1);
 \fill (7,1) circle[radius=5pt];
 \draw[thick] (7,1)--(8,2);
 \fill (8,2) circle[radius=5pt];
 \draw[thick] (8,2)--(9,1);
 \fill (9,1) circle[radius=5pt];
 \draw[thick] (9,1)--(10,0);
 \fill (10,0) circle[radius=5pt];
 \draw[thick] (10,0)--(14,4);
 \fill (11,1) circle[radius=5pt];
 \fill (12,2) circle[radius=5pt];
 \fill (13,3) circle[radius=5pt];
 \fill (14,4) circle[radius=5pt];
 \draw[thick] (14,4)--(17,1);
 \fill (15,3) circle[radius=5pt];
 \fill (16,2) circle[radius=5pt];
 \fill (17,1) circle[radius=5pt];
 
 \draw[thick] (17,1)--(18,2);
 \fill (18,2) circle[radius=5pt];
 \draw[thick] (18,2)--(20,0);
 \fill (19,1) circle[radius=5pt];
 \fill (20,0) circle[radius=5pt];
 \draw[thick] (20,0)--(24,4);
 \fill (21,1) circle[radius=5pt];
 \fill (22,2) circle[radius=5pt];
 \fill (23,3) circle[radius=5pt];
 \fill (24,4) circle[radius=5pt];
 \draw[thick] (24,4)--(28,0);
 \fill (25,3) circle[radius=5pt];
 \fill (26,2) circle[radius=5pt];
 \fill (27,1) circle[radius=5pt];
 \fill (28,0) circle[radius=5pt];

\draw (-.2,.8) node[color=red] {$1$}; 
\draw (3.8,2.8) node[color=red] {$2$}; 
\draw (4.8,1.8) node[color=red] {$3$}; 
\draw (6.8,1.8) node[color=red] {$4$}; 
\draw (8.8,1.8) node[color=red] {$5$}; 
\draw (9.8,.8) node[color=red] {$6$}; 
\draw (14.8,3.8) node[color=red] {$7$};
\draw (15.8,2.8) node[color=red] {$8$};
\draw (16.8,1.8) node[color=red] {$9$};
\draw (19,1.8) node[color=red] {$10$}; 
\draw (20,0.8) node[color=red] {$11$}; 
\draw (25.,3.8) node[color=red] {$12$}; 
\draw (26.,2.8) node[color=red] {$13$}; 
\draw (27.,1.8) node[color=red] {$14$}; 
\draw (28.,.8) node[color=red] {$15$}; 
 
\end{tikzpicture}
\end{center}

Then label the upstep of each peak: 

\begin{center}
\begin{tikzpicture}[scale=0.5, line width=1pt]
 
\draw[color=black!40, thick] (-3,0)--(29,0); 
 
 \fill (-2,0) circle[radius=5pt];
 \draw[thick] (-2,0)--(-1,1);
 \fill (-1,1) circle[radius=5pt];
 \draw[thick] (-1,1)--(0,0);
 \fill (0,0) circle[radius=5pt];
 
 \foreach \x in {0,1,...,2}{
	\draw[thick] (\x,\x)--(\x+1,\x+1);
  	\fill (\x+1,\x+1) circle[radius=5pt];}
  	
 \draw[thick] (3,3)--(4,2);
 \fill (4,2) circle[radius=5pt];
 \draw[thick] (4,2)--(5,1);
 \fill (5,1) circle[radius=5pt];
 \draw[thick] (5,1)--(6,2);
 \fill (6,2) circle[radius=5pt];
 \draw[thick] (6,2)--(7,1);
 \fill (7,1) circle[radius=5pt];
 \draw[thick] (7,1)--(8,2);
 \fill (8,2) circle[radius=5pt];
 \draw[thick] (8,2)--(9,1);
 \fill (9,1) circle[radius=5pt];
 \draw[thick] (9,1)--(10,0);
 \fill (10,0) circle[radius=5pt];
 \draw[thick] (10,0)--(14,4);
 \fill (11,1) circle[radius=5pt];
 \fill (12,2) circle[radius=5pt];
 \fill (13,3) circle[radius=5pt];
 \fill (14,4) circle[radius=5pt];
 \draw[thick] (14,4)--(17,1);
 \fill (15,3) circle[radius=5pt];
 \fill (16,2) circle[radius=5pt];
 \fill (17,1) circle[radius=5pt];
 
 \draw[thick] (17,1)--(18,2);
 \fill (18,2) circle[radius=5pt];
 \draw[thick] (18,2)--(20,0);
 \fill (19,1) circle[radius=5pt];
 \fill (20,0) circle[radius=5pt];
 \draw[thick] (20,0)--(24,4);
 \fill (21,1) circle[radius=5pt];
 \fill (22,2) circle[radius=5pt];
 \fill (23,3) circle[radius=5pt];
 \fill (24,4) circle[radius=5pt];
 \draw[thick] (24,4)--(28,0);
 \fill (25,3) circle[radius=5pt];
 \fill (26,2) circle[radius=5pt];
 \fill (27,1) circle[radius=5pt];
 \fill (28,0) circle[radius=5pt];

\draw (-1.8,.8) node[color=blue] {$1$}; 
\draw (2.2,2.8) node[color=red] {$2$}; 
\draw (5.2,1.8) node[color=blue] {$4$}; 
\draw (7.2,1.8) node[color=blue] {$5$}; 
\draw (13.2,3.8) node[color=red] {$7$};
\draw (17.,1.8) node[color=blue] {$10$}; 
\draw (23.,3.8) node[color=red] {$12$}; 
 
\end{tikzpicture}
\end{center}

\editone{Now greedily label remaining up-steps. }

\begin{center}
\begin{tikzpicture}[scale=0.5, line width=1pt]
 
\draw[color=black!40, thick] (-3,0)--(29,0); 
 
 \fill (-2,0) circle[radius=5pt];
 \draw[thick] (-2,0)--(-1,1);
 \fill (-1,1) circle[radius=5pt];
 \draw[thick] (-1,1)--(0,0);
 \fill (0,0) circle[radius=5pt];
 
 \foreach \x in {0,1,...,2}{
	\draw[thick] (\x,\x)--(\x+1,\x+1);
  	\fill (\x+1,\x+1) circle[radius=5pt];}
  	
 \draw[thick] (3,3)--(4,2);
 \fill (4,2) circle[radius=5pt];
 \draw[thick] (4,2)--(5,1);
 \fill (5,1) circle[radius=5pt];
 \draw[thick] (5,1)--(6,2);
 \fill (6,2) circle[radius=5pt];
 \draw[thick] (6,2)--(7,1);
 \fill (7,1) circle[radius=5pt];
 \draw[thick] (7,1)--(8,2);
 \fill (8,2) circle[radius=5pt];
 \draw[thick] (8,2)--(9,1);
 \fill (9,1) circle[radius=5pt];
 \draw[thick] (9,1)--(10,0);
 \fill (10,0) circle[radius=5pt];
 \draw[thick] (10,0)--(14,4);
 \fill (11,1) circle[radius=5pt];
 \fill (12,2) circle[radius=5pt];
 \fill (13,3) circle[radius=5pt];
 \fill (14,4) circle[radius=5pt];
 \draw[thick] (14,4)--(17,1);
 \fill (15,3) circle[radius=5pt];
 \fill (16,2) circle[radius=5pt];
 \fill (17,1) circle[radius=5pt];
 
 \draw[thick] (17,1)--(18,2);
 \fill (18,2) circle[radius=5pt];
 \draw[thick] (18,2)--(20,0);
 \fill (19,1) circle[radius=5pt];
 \fill (20,0) circle[radius=5pt];
 \draw[thick] (20,0)--(24,4);
 \fill (21,1) circle[radius=5pt];
 \fill (22,2) circle[radius=5pt];
 \fill (23,3) circle[radius=5pt];
 \fill (24,4) circle[radius=5pt];
 \draw[thick] (24,4)--(28,0);
 \fill (25,3) circle[radius=5pt];
 \fill (26,2) circle[radius=5pt];
 \fill (27,1) circle[radius=5pt];
 \fill (28,0) circle[radius=5pt];

\draw (-1.8,.8) node[color=blue] {$1$}; 
\draw (2.2,2.8) node[color=red] {$2$}; 
\draw (1.2,1.8) node {$3$}; 
\draw (0.2,0.8) node {$6$}; 
\draw (5.2,1.8) node[color=blue] {$4$}; 
\draw (7.2,1.8) node[color=blue] {$5$}; 
\draw (13.2,3.8) node[color=red] {$7$};
\draw (12.,2.8) node {$8$}; 
\draw (11.,1.8) node {$9$}; 
\draw (10.,.8) node {$11$}; 
\draw (17.,1.8) node[color=blue] {$10$}; 
\draw (23.,3.8) node[color=red] {$12$}; 
\draw (22.,2.8) node {$13$}; 
\draw (21.,1.8) node {$14$}; 
\draw (20.,.8) node {$15$}; 
 
\end{tikzpicture}
\end{center}
Thus, we have $A_1=\{1\}$, $A_2=\{2,3,6\}$, $A_3=\{4\}$, $A_4=\{5\}$, $A_5=\{7,8,9,11\}$, $A_6=\{10\}$, and $A_7=\{12,13,14,15\}$. The map $\dt$ gives the tableau in Figure \ref{fig:dyck_to_skew}.

\begin{center}

\begin{figure}[h]
%\begin{minipage}{.3\textwidth}
%
%\begin{center}
%\begin{tikzpicture}[scale=0.5, line width=1pt]
% \draw (0,0) grid (1,-7);
% \draw (1,0) grid (2,-2);
% \draw (2,0) grid (3,-2);
%
%\draw (.5,-.5) node[color=red] {$2$}; 
%\draw (.5,-1.5) node {$3$}; 
%\draw (.5,-2.5) node {$6$}; 
%\draw (.5,-4.5) node {$11$}; 
%\draw (.5,-3.5) node {$9$}; 
%\draw (1.5,-.5) node[color=red] {$7$};
%\draw (1.5,-1.5) node {$8$}; 
%\draw (.5,-5.5) node {$14$}; 
%\draw (.5,-6.5) node {$15$}; 
%\draw (2.5,-.5) node[color=red] {$12$}; 
%\draw (2.5,-1.5) node {$13$}; 
% 
%\end{tikzpicture}
%\end{center}
%\end{minipage}
%\begin{minipage}{.3\textwidth}

\begin{center}
\begin{tikzpicture}[scale=0.45, line width=1pt]
 \draw (0,0) grid (1,-7);
 \draw (1,0) grid (2,-2);
 \draw (2,0) grid (3,-2);
 \draw (2,4) grid (3,-2);

\draw (.5,-.5) node[color=red] {$2$}; 
\draw (.5,-1.5) node {$3$}; 
\draw (.5,-2.5) node {$6$}; 
\draw (.5,-4.5) node {$11$}; 
\draw (.5,-3.5) node {$9$}; 
\draw (1.5,-.5) node[color=red] {$7$};
\draw (1.5,-1.5) node {$8$}; 
\draw (.5,-5.5) node {$14$}; 
\draw (.5,-6.5) node {$15$}; 
\draw (2.5,-.5) node[color=red] {$12$}; 
\draw (2.5,-1.5) node {$13$}; 

\draw (2.5,3.5) node[color=blue] {$1$}; 
\draw (2.5,2.5) node[color=blue] {$4$}; 
\draw (2.5,1.5) node[color=blue] {$5$}; 
\draw (2.5,.5) node[color=blue] {$10$};
 
\end{tikzpicture}
\end{center}
%\end{minipage}
\caption{The construction of the skew tableaux in the final steps of the $\ds$ map.}
\label{fig:dyck_to_skew}
\end{figure}

\end{center}
\end{example}

\subsection{Map $\dt$}\label{map_dt}
\editone{The inspiration for the following map $\dt$, a map from $\Dyck(n,\ell,s)$ to $\Tri(n+2,\ell,s)$, comes from \cite[Proposition 6.2.1]{StanleyBook}. }

\begin{Def}
\editone{Let $P$ be a Dyck path in $\Dyck(n,\ell,s)$. We define $\dt(P)$ a certain type of triangulation, as follows. }

A Dyck path in $\Dyck(n,\ell,s)$ has the form
\[\U^{u_1}\D^{d_1}\U^{u_2}\D^{d_2}\cdots \U^{u_{s+\ell}}\D^{d_{s+\ell}},\]
where $s$ is the number of singletons, $\ell$ is the number of long ascents, and the $u_i$ and $d_i$ are positive integers. Recall that the triangulation is determined by its fan decomposition. Let $F_j$ be a fan with $u_j$ triangles. Then $\dt(P)$ is given by the fan decomposition \editone{$\big( (F_1,\dots, F_{s+\ell}), (d_1,\dots, d_{s+\ell})\big)$. }
\end{Def}
\begin{lemma}
If $P$ is a Dyck path in $\Dyck(n,\ell,s)$, then $\dt(P)$ is a triangulation in $\Tri(n+2,\ell,s)$.
\begin{proof}

Given $P\in \Dyck(n,\ell,s)$, note that that the number of triangles in $\dt(P)$ is given by sum of sizes of $F_j$:
\[\sum_{j=1}^{s+\ell}u_j=n\] 

As $\dt(P)$ has $n$ triangles, the boundary must have $n+2$ edges. Also note that since there is no singleton after the last long ascent in our Dyck path, the last fan $\cF(T)$ will not be a singleton fan. \editone{Since $\ell$ of the $u_j$ satisfy $u_j>1$, our triangulation has $\ell$ non-singular fans. Similarly, since $s$ of the $u_j$ satisfy $u_j=1$, our triangulation has $s$ singular fans. Also, note that $u_{s+\ell}>1$, so $F_{s+\ell}$ has more than one triangle.} Thus, we have constructed a triangulation in $\Tri(n+2,\ell,s)$, where the vertices are labeled as follows: label the origin of $F_1$ as 1, and then label the remaining vertices from 2 to $n$ in clock-wise order by starting at 1 and traveling along the boundary of the triangulation.

\end{proof}
\end{lemma}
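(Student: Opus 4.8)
The plan is to check three things in turn: that the prescribed data genuinely assembles into a triangulation of an $(n+2)$-gon, that this triangulation has exactly $\ell$ non-singular and $s$ singular fans, and that its final fan is non-singular. I would begin by recording the structural form of a Dyck path in $\Dyck(n,\ell,s)$. Since in any Dyck path maximal ascents and maximal descents alternate, beginning with an ascent and ending with a descent, a path with $\ell$ long ascents and $s$ singletons has exactly $s+\ell$ maximal ascents and the same number of maximal descents, giving the expression $\U^{u_1}\D^{d_1}\cdots \U^{u_{s+\ell}}\D^{d_{s+\ell}}$ with every $u_j,d_j\geq 1$. This is precisely the data fed into the fan decomposition.

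Next I would invoke the Remark that a fan decomposition uniquely determines a triangulation, so that assigning $F_j$ to be a fan of size $u_j$ and using $(d_1,\dots,d_{s+\ell})$ as origin distances produces a well-defined triangulation $\dt(P)$; the positivity of all $u_j$ and $d_j$ is what makes this data admissible. To pin down the number of vertices I would count triangles: the total number of triangles is $\sum_{j=1}^{s+\ell} u_j$, which equals the number of up-steps of $P$, namely the semi-length $n$. Since a triangulated $m$-gon has exactly $m-2$ triangles, having $n$ triangles forces the boundary to be an $(n+2)$-gon, and I would fix the vertex labeling by declaring the origin of $F_1$ to carry label $1$ and reading off the remaining labels along the boundary in clockwise order.

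The fan statistics then follow by matching sizes to ascent types. A fan is singular precisely when it contains a single triangle, i.e.\ when $u_j=1$, which happens exactly for the $s$ singletons; likewise the $\ell$ long ascents (those with $u_j\geq 2$) give the $\ell$ non-singular fans. Hence $\dt(P)$ has $s+\ell$ fans, of which precisely $s$ are singular, matching the count required for membership in $\Tri(n+2,\ell,s)$. Finally, for the last-fan condition, I would argue that the defining property of $\Dyck(n,\ell,s)$---no singleton appears after the last long ascent---forces the terminal ascent $\U^{u_{s+\ell}}$ to be long: were $u_{s+\ell}=1$, this singleton would occur after every long ascent, contradicting the hypothesis. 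Thus $F_{s+\ell}$ is non-singular, completing the verification that $\dt(P)\in\Tri(n+2,\ell,s)$.

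I expect the main obstacle to be the realizability step---confirming that the abstract data $\big((F_1,\dots,F_{s+\ell}),(d_1,\dots,d_{s+\ell})\big)$ closes up into an honest triangulation of the $(n+2)$-gon rather than merely a formal list of fan sizes and origin gaps. The triangle count is immediate, but one must ensure the origin distances are consistent with the recursive fan-decomposition procedure so that the fans tile the polygon without gaps or overlaps; this is where I would lean most heavily on the Remark and on tracking how the boundary edges are distributed among the successive fans.
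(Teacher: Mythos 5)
Your argument matches the paper's proof essentially step for step: you count triangles via $\sum u_j = n$ to get an $(n+2)$-gon, match singletons and long ascents to singular and non-singular fans, and use the ``no singleton after the last long ascent'' condition to conclude the final fan is non-singular, all while relying on the remark that a fan decomposition determines the triangulation. The realizability concern you flag at the end is also left implicit in the paper, which likewise defers entirely to that remark.
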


\begin{example}
Consider the path below. \begin{center}
\begin{tikzpicture}[scale=0.7, line width=1pt]
 
\draw[color=black!40, thick] (-3,0)--(19,0); 
 
 \fill (-2,0) circle[radius=5pt];
 \draw[thick] (-2,0)--(-1,1);
 \fill (-1,1) circle[radius=5pt];
 \draw[thick] (-1,1)--(0,0);
 \fill (0,0) circle[radius=5pt];
 
 \foreach \x in {0,1,...,2}{
	\draw[thick] (\x,\x)--(\x+1,\x+1);
  	\fill (\x+1,\x+1) circle[radius=5pt];}
  	
 \draw[thick] (3,3)--(4,2);
 \fill (4,2) circle[radius=5pt];
 \draw[thick] (4,2)--(5,1);
 \fill (5,1) circle[radius=5pt];
 \draw[thick] (5,1)--(6,2);
 \fill (6,2) circle[radius=5pt];
 \draw[thick] (6,2)--(7,1);
 \fill (7,1) circle[radius=5pt];
 \draw[thick] (7,1)--(8,2);
 \fill (8,2) circle[radius=5pt];
 \draw[thick] (8,2)--(9,1);
 \fill (9,1) circle[radius=5pt];
 \draw[thick] (9,1)--(10,0);
 \fill (10,0) circle[radius=5pt];
 \draw[thick] (10,0)--(14,4);
 \fill (11,1) circle[radius=5pt];
 \fill (12,2) circle[radius=5pt];
 \fill (13,3) circle[radius=5pt];
 \fill (14,4) circle[radius=5pt];
 \draw[thick] (14,4)--(18,0);
 \fill (15,3) circle[radius=5pt];
 \fill (16,2) circle[radius=5pt];
 \fill (17,1) circle[radius=5pt];
 \fill (18,0) circle[radius=5pt];

\end{tikzpicture}
\end{center}

Associated to this are five fans, given below. \editone{We shade these fans differently so we may more easily keep track of them throughout.}
\begin{center}
\begin{minipage}{.15\textwidth}
\begin{center}
\begin{tikzpicture}
\node[regular polygon,regular polygon sides=3,minimum size=1cm,draw,rotate=180] (a){};
\foreach \x in {1,...,3}{\node[circle,fill,inner sep=1.5pt] at (a.corner \x) {};}
\foreach \x in {1,...,3}
{\draw (a.corner 2) -- (a.corner \x) {};}
\filldraw [white] (a.corner 3) circle (1.pt);
\end{tikzpicture}

$F_1$
\end{center}
\end{minipage}
\begin{minipage}{.15\textwidth}
\begin{center}
\begin{tikzpicture}
\node[regular polygon,regular polygon sides=5,minimum size=1cm,draw,rotate=-36] (a){};
\filldraw[color=orange!15] (a.corner 1)--(a.corner 2)--(a.corner 3)--(a.corner 4)--(a.corner 5)--(a.corner 1);
\node[regular polygon,regular polygon sides=5,minimum size=1cm,draw,rotate=-36] (a){};
\foreach \x in {1,...,5}{\node[circle,fill,inner sep=1.5pt] at (a.corner \x) {};}
\foreach \x in {1,...,5}
{\draw (a.corner 2) -- (a.corner \x) {};}
\filldraw [white] (a.corner 2) circle (1.pt);
\end{tikzpicture}

$F_2$
\end{center}
\end{minipage}
\begin{minipage}{.15\textwidth}
\begin{center}
\begin{tikzpicture}
\node[regular polygon,regular polygon sides=3,minimum size=1cm,draw,rotate=180] (a){};
\filldraw[color=orange!35] (a.corner 1)--(a.corner 2)--(a.corner 3)--(a.corner 1);
\node[regular polygon,regular polygon sides=3,minimum size=1cm,draw,rotate=180] (a){};
\foreach \x in {1,...,3}{\node[circle,fill,inner sep=1.5pt] at (a.corner \x) {};}
\foreach \x in {1,...,3}
{\draw (a.corner 2) -- (a.corner \x) {};}
\filldraw [white] (a.corner 3) circle (1.pt);
\end{tikzpicture}

$F_3$
\end{center}
\end{minipage}
\begin{minipage}{.15\textwidth}
\begin{center}
\begin{tikzpicture}
\node[regular polygon,regular polygon sides=3,minimum size=1cm,draw,rotate=180] (a){};
\filldraw[color=orange!70] (a.corner 1)--(a.corner 2)--(a.corner 3)--(a.corner 1);
\node[regular polygon,regular polygon sides=3,minimum size=1cm,draw,rotate=180] (a){};
\foreach \x in {1,...,3}{\node[circle,fill,inner sep=1.5pt] at (a.corner \x) {};}
\foreach \x in {1,...,3}
{\draw (a.corner 2) -- (a.corner \x) {};}
\filldraw [white] (a.corner 3) circle (1.pt);
\end{tikzpicture}

$F_4$
\end{center}
\end{minipage}
\begin{minipage}{.15\textwidth}
\begin{center}
\begin{tikzpicture}
\node[regular polygon,regular polygon sides=6,minimum size=1cm,draw] (a){};

\filldraw[color=orange!90] (a.corner 1)--(a.corner 2)--(a.corner 3)--(a.corner 4)--(a.corner 5)--(a.corner 6)--(a.corner 1);
\node[regular polygon,regular polygon sides=6,minimum size=1cm,draw] (a){};

\foreach \x in {1,...,6}{\node[circle,fill,inner sep=1.5pt] at (a.corner \x) {};}
\foreach \x in {1,...,6}
{\draw (a.corner 2) -- (a.corner \x) {};}
\filldraw [white] (a.corner 2) circle (1.pt);
\end{tikzpicture}

$F_5$
\end{center}
\end{minipage}

\end{center}

Below are the subsequent steps of attaching $F_j$.

\begin{minipage}{.2\textwidth}
\begin{center}
\begin{tikzpicture}
\node[regular polygon,regular polygon sides=5,minimum size=1.1cm,draw] (a){};
\filldraw[color=orange!15] (a.corner 1)--(a.corner 2)--(a.corner 3)--(a.corner 4)--(a.corner 5)--(a.corner 1);
\node[regular polygon,regular polygon sides=5,minimum size=1.1cm,draw] (a){};
\foreach \x in {1,...,5}{\node[circle,fill,inner sep=1.5pt] at (a.corner \x) {};}
\foreach \x in {1,...,5}
{\draw (a.corner 2) -- (a.corner \x) {};}

\node[regular polygon,regular polygon sides=3,minimum size=.3cm,draw,rotate=36] at (-.38,.51) (b){};
\foreach \x in {1,...,3}{\node[circle,fill,inner sep=1.5pt] at (b.corner \x) {};}
\foreach \x in {1,...,3}
{\draw (b.corner 2) -- (b.corner \x) {};}
\filldraw [white] (a.corner 2) circle (1.pt);
\filldraw [white] (b.corner 1) circle (1.pt);
\end{tikzpicture}

\end{center}
\end{minipage}
\begin{minipage}{.2\textwidth}
\begin{center}
\begin{tikzpicture}
\node[regular polygon,regular polygon sides=5,minimum size=1.1cm,draw] (b){};
\filldraw[color=orange!15] (b.corner 1)--(b.corner 2)--(b.corner 3)--(b.corner 4)--(b.corner 5)--(b.corner 1);
\node[regular polygon,regular polygon sides=5,minimum size=1.1cm,draw] (b){};
\foreach \x in {1,...,5}{\node[circle,fill,inner sep=1.5pt] at (a.corner \x) {};}
\foreach \x in {1,...,5}
{\draw (b.corner 2) -- (b.corner \x) {};}

\node[regular polygon,regular polygon sides=3,minimum size=.3cm,draw,rotate=36] at (-.38,.51) (a){};
\foreach \x in {1,...,3}{\node[circle,fill,inner sep=1.5pt] at (a.corner \x) {};}
\foreach \x in {1,...,3}
{\draw (a.corner 2) -- (a.corner \x) {};}

\node[regular polygon,regular polygon sides=3,minimum size=.3cm,draw,rotate=12] at (.61,-.2) (c){};
\filldraw[color=orange!35] (c.corner 1)--(c.corner 2)--(c.corner 3)--(c.corner 1);
\node[regular polygon,regular polygon sides=3,minimum size=.3cm,draw,rotate=12] at (.61,-.2) (c){};

\foreach \x in {1,...,3}{\node[circle,fill,inner sep=1.5pt] at (c.corner \x) {};}
\foreach \x in {1,...,3}
{\draw (c.corner 2) -- (c.corner \x) {};}

\filldraw [white] (a.corner 1) circle (1.pt);
\filldraw [white] (b.corner 2) circle (1.pt);
\filldraw [white] (c.corner 2) circle (1.pt);

\end{tikzpicture}

\end{center}
\end{minipage}
\begin{minipage}{.2\textwidth}
\begin{center}
\begin{tikzpicture}
\node[regular polygon,regular polygon sides=5,minimum size=1.1cm,draw] (b){};
\filldraw[color=orange!15] (b.corner 1)--(b.corner 2)--(b.corner 3)--(b.corner 4)--(b.corner 5)--(b.corner 1);
\node[regular polygon,regular polygon sides=5,minimum size=1.1cm,draw] (b){};
\foreach \x in {1,...,5}{\node[circle,fill,inner sep=1.5pt] at (b.corner \x) {};}
\foreach \x in {1,...,5}
{\draw (b.corner 2) -- (b.corner \x) {};}

\node[regular polygon,regular polygon sides=3,minimum size=.3cm,draw,rotate=36] at (-.38,.51) (a){};
\foreach \x in {1,...,3}{\node[circle,fill,inner sep=1.5pt] at (a.corner \x) {};}
\foreach \x in {1,...,3}
{\draw (a.corner 2) -- (a.corner \x) {};}

\node[regular polygon,regular polygon sides=3,minimum size=.3cm,draw,rotate=12] at (.61,-.2) (c){};
\node[regular polygon,regular polygon sides=3,minimum size=.3cm,draw,rotate=12] at (.61,-.2) (c){};
\filldraw[color=orange!35] (c.corner 1)--(c.corner 2)--(c.corner 3)--(c.corner 1);
\node[regular polygon,regular polygon sides=3,minimum size=.3cm,draw,rotate=12] at (.61,-.2) (c){};
\foreach \x in {1,...,3}{\node[circle,fill,inner sep=1.5pt] at (c.corner \x) {};}
\foreach \x in {1,...,3}
{\draw (c.corner 2) -- (c.corner \x) {};}

\node[regular polygon,regular polygon sides=3,minimum size=.3cm,draw,rotate=192] at (.89,.05) (d){};
\filldraw[color=orange!70] (d.corner 1)--(d.corner 2)--(d.corner 3)--(d.corner 1);
\node[regular polygon,regular polygon sides=3,minimum size=.3cm,draw,rotate=192] at (.89,.05) (d){};
\foreach \x in {1,...,3}{\node[circle,fill,inner sep=1.5pt] at (d.corner \x) {};}
\foreach \x in {1,...,3}
{\draw (d.corner 2) -- (d.corner \x) {};}

\filldraw [white] (a.corner 1) circle (.9pt);
\filldraw [white] (b.corner 2) circle (1.pt);
\filldraw [white] (c.corner 2) circle (1.pt);
\filldraw [white] (d.corner 1) circle (1.pt);
\end{tikzpicture}

\end{center}
\end{minipage}
\begin{minipage}{.2\textwidth}
\begin{center}
\begin{tikzpicture}
\node[regular polygon,regular polygon sides=5,minimum size=1.1cm,draw] (b){};
\filldraw[color=orange!15] (b.corner 1)--(b.corner 2)--(b.corner 3)--(b.corner 4)--(b.corner 5)--(b.corner 1);
\node[regular polygon,regular polygon sides=5,minimum size=1.1cm,draw] (b){};
\foreach \x in {1,...,5}{\node[circle,fill,inner sep=1.5pt] at (b.corner \x) {};}
\foreach \x in {1,...,5}
{\draw (b.corner 2) -- (b.corner \x) {};}

\node[regular polygon,regular polygon sides=3,minimum size=.3cm,draw,rotate=36] at (-.38,.51) (a){};
\foreach \x in {1,...,3}{\node[circle,fill,inner sep=1.5pt] at (a.corner \x) {};}
\foreach \x in {1,...,3}
{\draw (a.corner 2) -- (a.corner \x) {};}

\node[regular polygon,regular polygon sides=3,minimum size=.3cm,draw,rotate=12] at (.61,-.2) (c){};
\foreach \x in {1,...,3}{\node[circle,fill,inner sep=1.5pt] at (c.corner \x) {};}
\foreach \x in {1,...,3}
{\draw (c.corner 2) -- (c.corner \x) {};}
\node[regular polygon,regular polygon sides=3,minimum size=.3cm,draw,rotate=12] at (.61,-.2) (c){};
\filldraw[color=orange!35] (c.corner 1)--(c.corner 2)--(c.corner 3)--(c.corner 1);
\node[regular polygon,regular polygon sides=3,minimum size=.3cm,draw,rotate=12] at (.61,-.2) (c){};

\node[regular polygon,regular polygon sides=3,minimum size=.3cm,draw,rotate=192] at (.89,.05) (d){};
\foreach \x in {1,...,3}{\node[circle,fill,inner sep=1.5pt] at (d.corner \x) {};}
\foreach \x in {1,...,3}
{\draw (d.corner 2) -- (d.corner \x) {};}

\filldraw[color=orange!70] (d.corner 1)--(d.corner 2)--(d.corner 3)--(d.corner 1);
\node[regular polygon,regular polygon sides=3,minimum size=.3cm,draw,rotate=192] at (.89,.05) (d){};

\node[regular polygon,regular polygon sides=6,minimum size=1.25cm,draw,rotate=143] at (.6,.8) (e){};

\filldraw[color=orange!90] (e.corner 1)--(e.corner 2)--(e.corner 3)--(e.corner 4)--(e.corner 5)--(e.corner 6)--(e.corner 1);
\node[regular polygon,regular polygon sides=6,minimum size=1.25cm,draw,rotate=143] at (.6,.8) (e){};
\foreach \x in {1,...,6}{\node[circle,fill,inner sep=1.5pt] at (e.corner \x) {};}
\foreach \x in {1,...,6}
{\draw (e.corner 2) -- (e.corner \x) {};}

\filldraw [white] (a.corner 1) circle (1.pt);
\filldraw [white] (b.corner 2) circle (1.pt);
\filldraw [white] (c.corner 2) circle (1.pt);
\filldraw [white] (d.corner 1) circle (1.pt);
\filldraw [white] (e.corner 2) circle (1.pt);
\end{tikzpicture}

\end{center}
\end{minipage}

Redrawn with vertex labels, we have the following, where the vertex labeled 1 is the origin of $F_1$, the vertex labeled 2 is the origin of $F_2$, and so on.
\begin{center}

\begin{tikzpicture}[scale=1]
\node[regular polygon,regular polygon sides=12,minimum size=4cm,draw,rotate=30] at (.74,.76) (e){};

\filldraw[color=orange!1] (e.corner 1)--(e.corner 2)--(e.corner 12)--(e.corner 1);
\draw[thick] (e.corner 1)--(e.corner 2)--(e.corner 12)--(e.corner 1);

\filldraw[color=orange!15] (e.corner 2)--(e.corner 12)--(e.corner 7)--(e.corner 4)--(e.corner 3)--(e.corner 2);
\draw[thick] (e.corner 2)--(e.corner 12)--(e.corner 7)--(e.corner 4)--(e.corner 3)--(e.corner 2);

\filldraw[color=orange!35] (e.corner 4)--(e.corner 5)--(e.corner 7)--(e.corner 4);
\draw[thick] (e.corner 4)--(e.corner 5)--(e.corner 7)--(e.corner 4);

\filldraw[color=orange!70] (e.corner 6)--(e.corner 5)--(e.corner 7)--(e.corner 6);
\draw[thick] (e.corner 6)--(e.corner 5)--(e.corner 7)--(e.corner 6);

\filldraw[color=orange!100] (e.corner 7)--(e.corner 8)--(e.corner 9)--(e.corner 10)--(e.corner 11)--(e.corner 12)--(e.corner 7);
\draw[thick] (e.corner 7)--(e.corner 8)--(e.corner 9)--(e.corner 10)--(e.corner 11)--(e.corner 12)--(e.corner 7);

\node[regular polygon,regular polygon sides=12,minimum size=4cm,draw,rotate=30] at (.74,.76) (e){};

\foreach \x in {1,...,12}{\node[circle,fill,inner sep=1.5pt] at (e.corner \x) {};}

\node[above] at (e.corner 1) {$1$};
\node[above] at (e.corner 2) {$2$};
\node[left] at (e.corner 3) {$3$};
\node[left] at (e.corner 4) {$4$};
\node[below] at (e.corner 5) {$5$};
\node[below] at (e.corner 6) {$6$};
\node[below] at (e.corner 7) {$7$};
\node[below] at (e.corner 8) {$8$};
\node[right] at (e.corner 9) {$9$};
\node[right] at (e.corner 10) {$10$};
\node[above] at (e.corner 11) {$11$};
\node[above] at (e.corner 12) {$12$};

{\draw (e.corner 2) -- (e.corner 12) {};}
{\draw (e.corner 2) -- (e.corner 7) {};}
{\draw (e.corner 2) -- (e.corner 4) {};}
{\draw (e.corner 4) -- (e.corner 7) {};}
{\draw (e.corner 5) -- (e.corner 7) {};}
{\draw (e.corner 6) -- (e.corner 7) {};}
{\draw (e.corner 7) -- (e.corner 10) {};}
{\draw (e.corner 7) -- (e.corner 9) {};}
{\draw (e.corner 7) -- (e.corner 11) {};}
{\draw (e.corner 7) -- (e.corner 12) {};}

\filldraw [white] (e.corner 1) circle (1.25pt);
\filldraw [white] (e.corner 2) circle (1.25pt);
\filldraw [white] (e.corner 4) circle (1.25pt);
\filldraw [white] (e.corner 5) circle (1.25pt);
\filldraw [white] (e.corner 7) circle (1.25pt);
\end{tikzpicture}
\end{center}

\end{example}

\subsection{Map $\td$}\label{map_td}
In this section, we construct a map $\td$ from $\Tri(n,t,s)$ to $\Dyck(n-2,t,s)$\editone{, which is the inverse to $\dt$.}

\begin{Def}
%Given $T\in \Tri(n,t,s)$, we study $\cF(T)=(F_1,F_2,\dots, F_{t+s})$, where $t$ is the number of non-singular fans in $\cF(T)$ and $s$ is the number of singular fans in $\cF(T)$. We let $x_j$ denote the origin of $F_j$. Now we identify the $x_j$ with its corresponding vertex in $T$.  Starting at $x_1$, and traveling counter-clockwise around the boundary of $T$, we let $d_j$ denote the number of boundary edges between $x_j$ and $x_{j+1}$ for $1\leq j\leq t+s-1$. We let $d_{t+s}$ be the number of boundary edges from $x_{t+s}$ to $x_1$ minus 2. Letting $u_j$ be the number of triangles in $F_j$, define $\td(T)$ to be
%\[\U^{u_1}\D^{d_1}\U^{u_2}\D^{d_2}\dots \U^{u_{t+s}}\D^{d_{t+s}}.\]

Given $T\in \Tri(n,t,s)$, we define $\td(T)$, a certain lattice path, as follows. 

Consider the fan decomposition $(\cF(T), \d(T)) = ((F_1,F_2,\ldots, F_{t+s}), (d_1,\ldots,d_{t+s-1})$. Let $x_j$ denote the origin of $F_j$ and let $d_{t+s}$ be the number of boundary edges from $x_{t+s}$ to $x_1$ minus 2. Now we identify the $x_j$ with its corresponding vertex in $T$. Letting $u_j$ be the number of triangles in $F_j$, define $\td(T)$ to be
\[\U^{u_1}\D^{d_1}\U^{u_2}\D^{d_2}\dots \U^{u_{t+s}}\D^{d_{t+s}}.\]

\end{Def}

\begin{lemma}
If $T\in \Tri(n,t,s)$, then the string $\td(T)$ is a dyck path in $\Dyck(n-2,t,s)$.
\end{lemma}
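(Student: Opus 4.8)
The plan is to verify the four defining properties of an element of $\Dyck(n-2,t,s)$ directly from the fan decomposition $(\cF(T),\delta(T))=((F_1,\dots,F_{t+s}),(d_1,\dots,d_{t+s-1}))$, saving the balloting (staircase) condition for last since it is the only nontrivial point. First I would count steps. Since $\cF(T)=(F_1,\dots,F_{t+s})$ partitions the $n-2$ triangles of $T$, the number of up-steps is $\sum_j u_j=n-2$. For the down-steps, the internal exponents telescope, giving $\sum_{j=1}^{t+s-1}d_j=x_{t+s}-x_1$, while the definition of $d_{t+s}$ as the number of boundary edges from $x_{t+s}$ back to $x_1$ minus $2$ gives $d_{t+s}=n-x_{t+s}+x_1-2$; adding these yields $\sum_j d_j=n-2$. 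Hence $\td(T)$ has equally many $\U$'s and $\D$'s, and (once balloting is known) is a lattice path of semilength $n-2$.

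Next I would read off the ascent statistics. An ascent $\U^{u_j}$ is a long ascent exactly when $F_j$ is non-singular and a singleton exactly when $F_j$ is singular; since $T\in\Tri(n,t,s)$ has $t$ non-singular and $s$ singular fans, $\td(T)$ has $t$ long ascents and $s$ singletons. Because the last fan $F_{t+s}$ is non-singular, the final ascent is long, so no singleton occurs after the last long ascent. Thus all statistics match those required of $\Dyck(n-2,t,s)$.

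The main obstacle is the balloting condition, namely that every prefix contains at least as many $\U$'s as $\D$'s. Since up-steps only raise the height, it suffices to check this at the bottom of each descent $\D^{d_k}$, i.e.\ to show $\sum_{j=1}^k u_j\ge \sum_{j=1}^k d_j=x_{k+1}-x_1$ for $1\le k<t+s$. I would establish this geometrically. Using that the fan decomposition processes origins in increasing label order, the fans $F_1,\dots,F_k$ triangulate a subpolygon $P_k$ whose boundary consists of the genuine boundary arc from $x_1$ to $x_{k+1}$ together with a return path of chords from $x_{k+1}$ back to $x_1$; proving that the first $k$ fans are cut off in exactly this way is the technical heart, and I expect to obtain it by induction on the recursion defining $\cF(T)$. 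Granting it, $P_k$ has the $x_{k+1}-x_1+1$ vertices of the boundary arc together with some number $R_k$ of interior vertices on the return path, so it contains $(x_{k+1}-x_1)+R_k-1$ triangles, whence $\sum_{j=1}^k u_j-(x_{k+1}-x_1)=R_k-1$.

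Finally I would pin down why $R_k-1\ge 0$. The triangle on the boundary edge $\{n,1\}$ contains vertex $1$, so it belongs to $F_1$; consequently $n$ is a fan-neighbor of vertex $1$ and is never the smallest vertex of any subtriangulation, hence never an origin. Therefore $x_{k+1}<n$ for all $k<t+s$, and vertex $n$ always lies strictly on the return path, forcing $R_k\ge 1$ and so height $R_k-1\ge 0$. Combined with $\sum_j u_j=\sum_j d_j$, this shows $\td(T)$ is a genuine Dyck path, completing the membership in $\Dyck(n-2,t,s)$. As a consistency check one may note that all of this agrees with $\td$ being the inverse of the already well-defined map $\dt$.
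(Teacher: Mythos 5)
Your proof is correct and follows essentially the same route as the paper's: count the up- and down-steps, read off the ascent statistics from the singular versus non-singular fans (with the last fan non-singular giving the last long ascent), and establish the ballot condition by bounding $\sum_{j=1}^{k} d_j$ by the number of triangles in the sub-triangulation formed by $F_1,\dots,F_k$. Your handling of that last inequality---via the subpolygon $P_k$, its return path, and the observation that vertex $n$ is never an origin---is simply a more detailed justification of the step the paper states tersely.
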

\begin{proof}
%\GDN{Double check the following, including what shows up at the top of the next page.}
We claim $\td(T)$ is a valid Dyck path. First, note that given any fan decomposition of a triangulation on $n$ vertices, the largest label for an origin is $n-1$. Thus, the largest  number of boundary edges between the first and last origin (travelling counter-clockwise) is $n-2$, which is precisely the number of triangles in the triangulation. 
Thus, by studying the triangulation $T_k$ given by $\mathcal{F}(T_k)=(F_1,F_2,\dots, F_k)$ and the origins $x_1,\dots, x_k$, we see that 
\[\sum_{j=1}^k d_j\leq \text{the number of triangles in $T_k$}=\sum_{j=1}^k u_j.\]
Also, note that
\begin{align*}
\displaystyle \sum_{j=1}^{t+s}u_j&=\text{number of triangles in $T$}\\
&=n-2\\
&=\text{number of boundary edges of $T$ minus 2}\\
&=\sum_{j=1}^{t+s}d_j.
\end{align*}  Hence this path is a Dyck path with semi length $n-2$. Finally, note that $u_{t+s}=\#F_{t+s}\geq 2$ since $F_{t+s}$ is the last fan in $\mathcal{F}(T)$. Hence, we have constructed a path in $\Dyck(n-2,t,s)$.
\end{proof}

\begin{example}
Suppose we start with the following triangulation. 

\begin{center}
\begin{tikzpicture}[scale=1]
\node[regular polygon,regular polygon sides=12,minimum size=4cm,draw,rotate=30] at (.74,.76) (e){};
\foreach \x in {1,...,12}{\node[circle,fill,inner sep=1.5pt] at (e.corner \x) {};}

\node[above] at (e.corner 1) {$1$};
\node[above] at (e.corner 2) {$2$};
\node[left] at (e.corner 3) {$3$};
\node[left] at (e.corner 4) {$4$};
\node[below] at (e.corner 5) {$5$};
\node[below] at (e.corner 6) {$6$};
\node[below] at (e.corner 7) {$7$};
\node[below] at (e.corner 8) {$8$};
\node[right] at (e.corner 9) {$9$};
\node[right] at (e.corner 10) {$10$};
\node[above] at (e.corner 11) {$11$};
\node[above] at (e.corner 12) {$12$};

{\draw (e.corner 2) -- (e.corner 12) {};}
{\draw (e.corner 2) -- (e.corner 7) {};}
{\draw (e.corner 2) -- (e.corner 4) {};}
{\draw (e.corner 4) -- (e.corner 7) {};}
{\draw (e.corner 5) -- (e.corner 7) {};}
{\draw (e.corner 6) -- (e.corner 7) {};}
{\draw (e.corner 7) -- (e.corner 10) {};}
{\draw (e.corner 7) -- (e.corner 9) {};}
{\draw (e.corner 7) -- (e.corner 11) {};}
{\draw (e.corner 7) -- (e.corner 12) {};}
\end{tikzpicture}
\end{center}

Below, we identify the origins of the fans in $\mathcal{F}=(F_1,F_2,F_3,F_4,F_5)$ by using larger circles for such vertices. 
\begin{center}
\begin{tikzpicture}[scale=.9]
\node[regular polygon,regular polygon sides=12,minimum size=4cm,draw,rotate=30] at (.74,.76) (e){};

\filldraw[color=orange!1] (e.corner 1)--(e.corner 2)--(e.corner 12)--(e.corner 1);
\draw[thick] (e.corner 1)--(e.corner 2)--(e.corner 12)--(e.corner 1);

\filldraw[color=orange!15] (e.corner 2)--(e.corner 12)--(e.corner 7)--(e.corner 4)--(e.corner 3)--(e.corner 2);
\draw[thick] (e.corner 2)--(e.corner 12)--(e.corner 7)--(e.corner 4)--(e.corner 3)--(e.corner 2);

\filldraw[color=orange!35] (e.corner 4)--(e.corner 5)--(e.corner 7)--(e.corner 4);
\draw[thick] (e.corner 4)--(e.corner 5)--(e.corner 7)--(e.corner 4);

\filldraw[color=orange!70] (e.corner 6)--(e.corner 5)--(e.corner 7)--(e.corner 6);
\draw[thick] (e.corner 6)--(e.corner 5)--(e.corner 7)--(e.corner 6);

\filldraw[color=orange!100] (e.corner 7)--(e.corner 8)--(e.corner 9)--(e.corner 10)--(e.corner 11)--(e.corner 12)--(e.corner 7);
\draw[thick] (e.corner 7)--(e.corner 8)--(e.corner 9)--(e.corner 10)--(e.corner 11)--(e.corner 12)--(e.corner 7);

\node[regular polygon,regular polygon sides=12,minimum size=4cm,draw,rotate=30] at (.74,.76) (e){};

\node[above] at (e.corner 1) {$1$};
\node[above] at (e.corner 2) {$2$};
\node[left] at (e.corner 3) {$3$};
\node[left] at (e.corner 4) {$4$};
\node[below] at (e.corner 5) {$5$};
\node[below] at (e.corner 6) {$6$};
\node[below] at (e.corner 7) {$7$};
\node[below] at (e.corner 8) {$8$};
\node[right] at (e.corner 9) {$9$};
\node[right] at (e.corner 10) {$10$};
\node[above] at (e.corner 11) {$11$};
\node[above] at (e.corner 12) {$12$};

\foreach \x in {1,...,12}{\node[circle,fill,inner sep=1.5pt] at (e.corner \x) {};}

{\draw (e.corner 2) -- (e.corner 12) {};}
{\draw (e.corner 2) -- (e.corner 7) {};}
{\draw (e.corner 2) -- (e.corner 4) {};}
{\draw (e.corner 4) -- (e.corner 7) {};}
{\draw (e.corner 5) -- (e.corner 7) {};}
{\draw (e.corner 6) -- (e.corner 7) {};}
{\draw (e.corner 7) -- (e.corner 10) {};}
{\draw (e.corner 7) -- (e.corner 9) {};}
{\draw (e.corner 7) -- (e.corner 11) {};}
{\draw (e.corner 7) -- (e.corner 12) {};}

\filldraw [white] (e.corner 1) circle (1.25pt);
\filldraw [white] (e.corner 2) circle (1.25pt);
\filldraw [white] (e.corner 4) circle (1.25pt);
\filldraw [white] (e.corner 5) circle (1.25pt);
\filldraw [white] (e.corner 7) circle (1.25pt);
\end{tikzpicture}
\end{center}

 Hence,  $F_1$, $F_3$, and $F_4$ are all fans of size 1, $F_2$ is a fan of size 3, and $F_5$ is a fan of size 4. The origins of these fans are identified in the image below. Consequently, $u_1=u_3=u_4=1$, $u_2=3$, and $u_5=4$. Also note that $d_1=d_3$ and $d_2=d_4=2$. Hence we have the Dyck path 
 \[\U\D\U^3\D^2\U\D\U\D^2\U^4\D^4,\]
 which visualized gives the following path.
 \begin{center}
\begin{tikzpicture}[scale=0.6, line width=1pt]
 
\draw[color=black!40, thick] (-3,0)--(19,0); 
 
 \fill (-2,0) circle[radius=5pt];
 \draw[thick] (-2,0)--(-1,1);
 \fill (-1,1) circle[radius=5pt];
 \draw[thick] (-1,1)--(0,0);
 \fill (0,0) circle[radius=5pt];
 
 \foreach \x in {0,1,...,2}{
	\draw[thick] (\x,\x)--(\x+1,\x+1);
  	\fill (\x+1,\x+1) circle[radius=5pt];}
  	
 \draw[thick] (3,3)--(4,2);
 \fill (4,2) circle[radius=5pt];
 \draw[thick] (4,2)--(5,1);
 \fill (5,1) circle[radius=5pt];
 \draw[thick] (5,1)--(6,2);
 \fill (6,2) circle[radius=5pt];
 \draw[thick] (6,2)--(7,1);
 \fill (7,1) circle[radius=5pt];
 \draw[thick] (7,1)--(8,2);
 \fill (8,2) circle[radius=5pt];
 \draw[thick] (8,2)--(9,1);
 \fill (9,1) circle[radius=5pt];
 \draw[thick] (9,1)--(10,0);
 \fill (10,0) circle[radius=5pt];
 \draw[thick] (10,0)--(14,4);
 \fill (11,1) circle[radius=5pt];
 \fill (12,2) circle[radius=5pt];
 \fill (13,3) circle[radius=5pt];
 \fill (14,4) circle[radius=5pt];
 \draw[thick] (14,4)--(18,0);
 \fill (15,3) circle[radius=5pt];
 \fill (16,2) circle[radius=5pt];
 \fill (17,1) circle[radius=5pt];
 \fill (18,0) circle[radius=5pt];

\end{tikzpicture}
\end{center}

The following map is an explicit interpretation of the map $\sd$ composed with $\dt$ without needing to bring up Dyck paths.
\end{example}

\subsection{Map $\st$}\label{map_st}
In this section, we construct a map $\st$ from $\Skyt(a,i,b)$ to $\Tri(a+b+2i,i+1,b-2)$.

\begin{Def}
Given a tableau $\lambda\in \Skyt(a,i,b)$\editone{, we define $\st(\lambda)$, a triangulation, as follows. }

%Let $\l_1$ be the skew tableau obtained by taking the top $b-1$ rows of $\l$.
% and $\l_2$ be the skew tableau obtained by taking the bottom $a-1$ rows of $\l$. 
%\GDN{I'm getting rid of the reference to $\l_2$ because this map doesn't use it.}
%Observe that via the fan decomposition we gave above, a triangulation $T$ is completely determined by a sequence of fans $\cF(T)$ and the number of boundary edges between origins of consecutive fans in $\cF(T)$. 
\editone{Let $d_j=x_{j+1}-x_j$, where $x_1,\dots, x_{i+b-1}$ are the entries in the top $b-1$ rows of $\l$ so that $x_1<x_2<\dots<x_{i+b-1}$. 
\edittwo{Let $\Nom(\lambda)=(A_1,\dots, A_{i+b-1})$.} For $1 \leq j\leq i+b-1$, let $f_j=\#A_j$.
Let $F_j$ be a fan of size $f_j$.  Then we define $\st(\lambda)$ to be the triangulation whose fan decomposition is $(\mathcal{F},\delta)$, where $\mathcal{F}=(F_1,\dots, F_{i+b-1})$ and $\delta=(d_1,\dots, d_{i+b-2})$.}

\end{Def}

\begin{lemma}
Let $\lambda\in \Skyt(a,i,b)$. Then $\st(\lambda)\in \Tri(a+b+2i,i+1,b-2)$.
\end{lemma}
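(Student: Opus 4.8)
The plan is to recognize $\st$ as the composition $\dt\circ\sd$ and then invoke the two lemmas already proved for those maps. First I would unwind the definitions to confirm that $\st(\lambda)$ and $\dt(\sd(\lambda))$ carry the \emph{same} fan decomposition. Writing $\Nom(\lambda)=(A_1,\dots,A_{i+b-1})$, $f_j=\#A_j$, $x_j=\min A_j$, and $d_j=x_{j+1}-x_j$, the path $\sd(\lambda)$ reads
\[
\U^{f_1}\D^{d_1}\U^{f_2}\D^{d_2}\cdots\U^{f_{i+b-1}}\D^{d_{i+b-1}},
\]
where the terminal descent $d_{i+b-1}$ is the one that closes the path. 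By the definition of $\dt$, the up-run lengths $f_1,\dots,f_{i+b-1}$ become the sizes of the fans $F_1,\dots,F_{i+b-1}$, and the descent lengths $d_1,\dots,d_{i+b-2}$ become the gaps between consecutive origins. This is exactly the pair $(\mathcal{F},\delta)$ with $\mathcal{F}=(F_1,\dots,F_{i+b-1})$ and $\delta=(d_1,\dots,d_{i+b-2})$ used to define $\st(\lambda)$, so $\st(\lambda)=\dt(\sd(\lambda))$.

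With this identification in hand, the conclusion is immediate. The lemma for $\sd$ gives $\sd(\lambda)\in\Dyck(a+b+2i-2,\,i+1,\,b-2)$. Applying the lemma for $\dt$ with $n=a+b+2i-2$, $\ell=i+1$, and $s=b-2$ then yields $\dt(\sd(\lambda))\in\Tri(n+2,\ell,s)=\Tri(a+b+2i,\,i+1,\,b-2)$, which is the desired membership.

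If instead one prefers a self-contained verification that does not route through Dyck paths, I would check the three defining requirements of $\Tri(a+b+2i,i+1,b-2)$ directly against the fan data $(\mathcal{F},\delta)$. The triangle count is $\sum_j f_j=\sum_j\#A_j=\#[a+b+2i-2]=a+b+2i-2$, so the polygon has $a+b+2i$ vertices; the singular fans are exactly the $F_j$ with $\#A_j=1$, namely those whose $x_j$ lies in the top $b-2$ rows, of which there are $b-2$, leaving $i+1$ non-singular fans; and the last fan is non-singular because $\#A_{i+b-1}>1$ by the Remark following the definition of $\Nom$. The one step requiring genuine care---and the main obstacle---is verifying that $(\mathcal{F},\delta)$ is \emph{realizable} as an honest triangulation at all, since an arbitrary sequence of fan sizes and gaps need not assemble into a polygon triangulation. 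This realizability amounts to the partial-sum inequality $\sum_{j=1}^k d_j\le\sum_{j=1}^k f_j$ for every $k$, which follows from the nomincreasing property via $[x_1,x_{k+1}]\subseteq A_1\cup\cdots\cup A_{k+1}$ exactly as in the proof of the $\sd$ lemma. This is precisely the obstruction that the composition route sidesteps by borrowing it from the already-verified $\sd$ and $\dt$ lemmas.
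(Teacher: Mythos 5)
Your proposal is correct, but your primary route is genuinely different from the paper's. The paper proves this lemma by direct verification --- it counts the triangles ($a+b+2i-2$ of them, hence an $(a+b+2i)$-gon), counts singular versus non-singular fans via the cardinalities of the $A_j$, and notes $|A_{i+b-1}|>1$ so the last fan is non-singular --- which is essentially your ``self-contained'' alternative. Your main argument instead factors $\st$ as $\dt\circ\sd$ and imports the conclusions of the lemmas already proved for those maps; the paper itself remarks, just before defining $\st$, that this map is $\sd$ composed with $\dt$ ``without needing to bring up Dyck paths,'' but it does not exploit that identity in the proof. The composition route buys you realizability of the fan data for free, which is precisely the point the paper's direct proof glosses over: it asserts the output is a triangulation because ``triangulations are uniquely determined by their fan decomposition,'' without checking that the prescribed sequence of fan sizes and origin gaps actually assembles into a polygon. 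Your identification of the partial-sum inequality $\sum_{j\leq k} d_j\leq\sum_{j\leq k} f_j$, inherited from the nomincreasing property, as the needed condition is a genuine improvement in care over the printed argument. One small point worth writing out if you take the composition route: $\dt$ records a terminal gap $d_{i+b-1}$ while $\st$ records only $i+b-2$ gaps, so you should note that the final gap is determined by the rest before concluding that the two fan decompositions coincide.
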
  
\begin{proof}
Recall triangulations are uniquely determined by their fan decomposition, and thus $\st(\lambda)$ is guaranteed to be a triangulation.
Note that the number of triangles in this triangulation is precisely the number of entries of $\lambda$, which is $a+b+2i-2$. Hence, the boundary of our constructed triangulation has $a+b+2i$ edges. Recall that among $A_1,\dots, A_{i+b-1}$, precisely $i+1$ have cardinality larger than 1, and precisely $b-2$ have cardinality exactly 1. Thus, our proposed fan decomposition for $\st(\lambda)$ has precisely $i+1$ non singular fans and $b-2$ singular fans. Finally, note that by construction, $f_{i+b-1}>1$ since $|A_{i+b-1}|>1$ \editone{by construction of $\Nom(\lambda)$. T}hat is, the last fan appearing in $\mathcal{F}$ is not singular. All together, this verifies that we have constructed a triangulation in $\Tri(a+b+2i, i+1, b-2)$.
\end{proof}

\begin{example}
Consider the following choice for $\lambda$. 
%The corresponding $\lambda_1$ and $\lambda_2$ are also given.

\usetikzlibrary{decorations.pathreplacing}
\newcommand{\tikznode}[3][inner sep=0pt]{\tikz[remember
picture,baseline=(#2.base)]{\node(#2)[#1]{$#3$};}}
\ytableausetup{centertableaux}
\begin{center}

\begin{ytableau}
       \none & \none & 1 \\
       \none & \none & 4 \\
       2 &5 & 7 \\
       3&6&9\\
       8&\none&\none\\
              10&\none&\none\\
\end{ytableau}
\end{center}

\editone{Hence, $A_1=\{1\}$, $A_2=\{2,3\}$, $A_3=\{4\}$, $A_4=\{5,6,8\}$, and $A_5=\{7,9,10\}$. We have $x_1=1$, $x_2=2$, $x_3=4$, $x_4=5$, and $x_5=7$. Thus, $d_1=1$, $d_2=2$, $d_3=1$, and $d_4=2$. Also $f_1=1$, $f_2=2$, $f_3=1$, $f_4=3$, and $f_5=3$. This constructs the following triangulation.}

\begin{center}
\begin{tikzpicture}[scale=1]
\node[regular polygon,regular polygon sides=12,minimum size=4cm,draw,rotate=30] at (.74,.76) (e){};

\filldraw[color=orange!1] (e.corner 1)--(e.corner 2)--(e.corner 12)--(e.corner 1);

\draw[thick] (e.corner 1)--(e.corner 2)--(e.corner 12)--(e.corner 1);

\filldraw[color=orange!15] (e.corner 2)--(e.corner 12)--(e.corner 4)--(e.corner 3)--(e.corner 2);
\draw[thick] (e.corner 2)--(e.corner 12)--(e.corner 4)--(e.corner 3)--(e.corner 2);

\filldraw[color=orange!35] (e.corner 4)--(e.corner 5)--(e.corner 12)--(e.corner 4);
\draw[thick] (e.corner 4)--(e.corner 5)--(e.corner 12)--(e.corner 4);

\filldraw[color=orange!70] (e.corner 6)--(e.corner 5)--(e.corner 12)--(e.corner 11)--(e.corner 7)--(e.corner 6);
\draw[thick] (e.corner 6)--(e.corner 5)--(e.corner 12)--(e.corner 11)--(e.corner 7)--(e.corner 6);

\filldraw[color=orange!100] (e.corner 7)--(e.corner 8)--(e.corner 9)--(e.corner 10)--(e.corner 11)--(e.corner 7);
\draw[thick] (e.corner 7)--(e.corner 8)--(e.corner 9)--(e.corner 10)--(e.corner 11)--(e.corner 7);

\node[regular polygon,regular polygon sides=12,minimum size=4cm,draw,rotate=30] at (.74,.76) (e){};

\foreach \x in {1,...,12}{\node[circle,fill,inner sep=1.5pt] at (e.corner \x) {};}

\foreach \x in {1,...,3}{\node[left] at (e.corner \x) {\x};}

\foreach \x in {4,...,7}{\node[below] at (e.corner \x) {\x};}

\foreach \x in {8,...,12}{\node[right] at (e.corner \x) {\x};}

{\draw (e.corner 2) -- (e.corner 12) {};}
{\draw (e.corner 2) -- (e.corner 4) {};}
{\draw (e.corner 4) -- (e.corner 12) {};}
{\draw (e.corner 5) -- (e.corner 12) {};}
{\draw (e.corner 5) -- (e.corner 7) {};}
{\draw (e.corner 6) -- (e.corner 7) {};}
{\draw (e.corner 7) -- (e.corner 10) {};}
{\draw (e.corner 7) -- (e.corner 9) {};}
{\draw (e.corner 5) -- (e.corner 11) {};}
{\draw (e.corner 7) -- (e.corner 11) {};}

\filldraw [white] (e.corner 1) circle (1.25pt);
\filldraw [white] (e.corner 2) circle (1.25pt);
\filldraw [white] (e.corner 4) circle (1.25pt);
\filldraw [white] (e.corner 5) circle (1.25pt);
\filldraw [white] (e.corner 7) circle (1.25pt);
\end{tikzpicture}
\end{center}
\end{example}

%Now suppose that we are given a tableau $\lambda\in \Skyt(a,i,b)$. Let $\l_1$ be the skew tableau obtained by taking the top $b-1$ rows of $\l$ and $\l_2$ be the skew tableau obtained by taking the bottom $a-1$ rows of $\l$. Observe that via the fan decomposition we gave above, a triangulation $T$ is completely determined by a sequence of fans $\cF(T)$ and the number of boundary edges between origins of consecutive fans in $\cF(T)$. Let $\beta_j=v_{j+1}-v_j$, where $v_1,\dots, v_{i+b-1}$ are the entries of $\l_1$ so that $v_1<v_2<\dots, v_{i+b-1}$. Let $(m_k)$ be a sequence so that $v_{m_1},\ldots, v_{m_{i+1}}$ are the entries of row $b-1$ so that $v_{m_1}<v_{m_2}<\cdots < v_{m_{i+1}}$.  
%
%Let $w_1,\dots, w_{a+i-1}$ be the entries of $\l_2$ so that $w_1<w_2<\dots<w_{a+i-1}$. Let $(\ell_{k})$ be a sequence such that $w_{\ell_1},w_{\ell_2},\dots, w_{\ell_{i+1}}$ are the entries of row 1 of $\l_2$ and $w_{\ell_1}<w_{\ell_2}<\cdots < w_{\ell_{i+1}}$. For $1\leq j\leq i+b-1$, let
%\[f_j\coloneqq\begin{cases}
%\ell_{k+1}-\ell_{k}+1 & j=m_k \text{ for some $k$}\\
%1 & \text{otherwise}.
%\end{cases}\]
%
%Let $F_j$ be a fan of size $f_j$.  Then we define $T$ to be the triangulation so that $\cF(T)=\{F_1,\dots, F_{i+b-1}\}$ where $\beta_j$ is the number of boundary edges between the origins of $F_j$ and $F_{j+1}$.

The following map is an explicit interpretation of the map $\td$ composed with $\ds$ without needing to bring up Dyck paths.

\subsection{Map $\ts$}\label{map_ts}
%Given $T\in \Tri(n,t,s)$, we study $\cF(T)=(F_1,F_2,\dots, F_{s+t})$. We define $e_j$ to be the boundary edge of $F_j$ containing the origin of $F_j$, which we denote $x_j$. Starting at $x_1$, and traveling counter-clockwise around the boundary of $T$, we let $\b_j$ denote the number of boundary edges between $x_j$ and $x_{j+1}$ for $1\leq j\leq t+s$. We let $\b_{t+s}$ be the number of boundary edges from $x_{t+s}$ to $x_1$ minus 2. Label the vertices in counter-clockwise order starting at $x_1$.  
In this section, we construct a map $\td$ from $\Tri(n,t,s)$ to $\Skyt(n-2-s-2t,t-1,s+2)$\editone{, which is the inverse to $\st$.}

\begin{Def}
Let $T\in \Tri(n,t,s)$. We define $\ts(T)$, a skew tableau, as follows. 

 We label the triangles in the triangulation in the following way: For each fan, label a triangle with the label of the fan's origin. Then, greedily label other triangles with the unused vertices in the order that fans appear in $\mathcal{F}(T)$. (Triangles within a single fan need not be labeled in any particular order.) Let $A_j$ be the labels appearing in the fan $F_j$. 
Let $\ts(T)$ be the skew diagram \editone{so that $\Nom\big(\ts(T)\big)=(A_1,\dots, A_{t+s})$.}
\end{Def}
\begin{lemma}
Given $T\in \Tri(n,t,s)$, we have $\ts(T) \in \Skyt(n-2-s-2t,t-1,s+2)$.
\end{lemma}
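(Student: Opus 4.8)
The plan is to follow the template already used for the companion maps $\ds$ and $\td$, reducing the claim to a few checks about the labeled sequence $(A_1,\dots,A_{t+s})$ produced by $\ts$, and then reading off the $\Skyt$-parameters from the $\Nom$ correspondence. Concretely, it suffices to show that $(A_1,\dots,A_{t+s})$ is a nomincreasing partition of an appropriate ground set with the right block-size statistics and with $|A_{t+s}|>1$; the remark following the definition of $\Nom$ then guarantees that a unique skew tableau with this $\Nom$-data exists, and that tableau is $\ts(T)$.

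First I would dispose of the bookkeeping. Since $\cF(T)=(F_1,\dots,F_{t+s})$ has exactly $t$ non-singular and $s$ singular fans, and $|A_j|$ equals the number of triangles of $F_j$, precisely $t$ of the $A_j$ satisfy $|A_j|>1$ and precisely $s$ satisfy $|A_j|=1$; in particular there are $t+s$ blocks. Because the last fan $F_{t+s}$ is non-singular, $|A_{t+s}|>1$. Finally, $T$ has $n-2$ triangles, so the labels used are exactly $1,\dots,n-2$ and $(A_1,\dots,A_{t+s})$ is an ordered partition of $[n-2]$. Matching against $\Nom(\lambda)=(A_1,\dots,A_{i+b-1})$ for $\lambda\in\Skyt(a,i,b)$, which has $i+1$ blocks of size larger than $1$ and $b-2$ singletons, forces $i+1=t$ and $b-2=s$, i.e. $i=t-1$ and $b=s+2$; the remaining parameter $a$ is then pinned down by equating the number of entries $a+b+2i-2$ with the triangle count $n-2$.

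The substantive step is to verify that $(A_1,\dots,A_{t+s})$ is genuinely nomincreasing, i.e. that $x_j\coloneqq\min A_j$ is strictly increasing in $j$ and that the second-smallest elements of the non-singular blocks increase in the order those blocks occur. I would argue this from the fan decomposition directly. The origins carry vertex labels $o_1<\dots<o_{t+s}$ (they increase because $\delta(T)$ records positive gaps $d_j=o_{j+1}-o_j$), and the labeling rule of $\ts$ places $o_j$ on one triangle of $F_j$, so $o_j\in A_j$. The two facts I need are that $o_j=\min A_j$ and that the greedily assigned non-origin labels fall into blocks so that the second-smallest entries come out increasing. Both follow from the partial-sum inequality $\sum_{j=1}^{k} d_j\le \sum_{j=1}^{k} u_j$ established in the $\td$ lemma (with $u_j=|A_j|$): this inequality says exactly that every label at most $o_k$ already lies in $A_1\cup\cdots\cup A_k$, which is equivalent to the defining condition of a nomincreasing partition. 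Since the greedy rule fills the blocks from left to right using the smallest available labels, the inequality forces $o_j$ to be the least element of $A_j$ and forces the non-origin labels entering a non-singular fan to exceed those entering earlier non-singular fans, giving the required ordering of the second-smallest elements.

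With the nomincreasing property in hand, $(A_1,\dots,A_{t+s})$ is a legitimate value of $\Nom$, and since additionally $|A_{t+s}|>1$, the remark on $\Nom$ yields a unique $\lambda\in\Skyt(a,i,b)$ with $\Nom(\lambda)=(A_1,\dots,A_{t+s})$, namely $\lambda=\ts(T)$. Substituting the parameters found above then completes the identification of the codomain. I expect the main obstacle to be precisely the nomincreasing verification: everything else is counting, so the proof hinges on translating the geometric validity of the triangulation, encoded in the partial-sum inequality relating fan sizes to origin gaps, into the combinatorial ordering constraints defining a nomincreasing partition.
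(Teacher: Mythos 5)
Your proposal follows essentially the same route as the paper's proof: count the singular and non-singular fans to get the block-size statistics of $(A_1,\dots,A_{t+s})$, note $|A_{t+s}|>1$ because the last fan is non-singular, verify the nomincreasing property, and count the $n-2$ triangles to pin down the remaining parameter. The one place you add genuine content is the nomincreasing verification: the paper simply asserts it ``due to the greedy labeling,'' whereas you correctly reduce it to the partial-sum inequality $\sum_{j\le k} d_j \le \sum_{j\le k} u_j$ from the $\td$ lemma, which is exactly what forces each origin to be the minimum of its block and forces the greedily assigned labels to respect the required ordering of second minima. One caveat: carrying out your final substitution ($a+b+2i-2=n-2$ with $i=t-1$, $b=s+2$) actually yields $a=n-s-2t$ rather than the $n-2-s-2t$ in the statement; the paper's own example (a triangulation in $\Tri(12,3,2)$ mapping to a tableau whose first column has $4=12-2-6$ boxes) confirms the discrepancy is a slip in the stated parameter, not in your argument.
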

\begin{proof}
Note that the number of $A_j$ so that $|A_j|=1$ is precisely the number of singleton fans in $T$, which is $s$. 
%Thus the last column of $\ts(T)$ has $s+2$ entries. 
Also, the number of $A_j$ so that $|A_j|>1$ is $t$. 
%Thus the number of columns in $\ts(T)$ is $t$. 
\editone{Next, notice that $(A_1,\dots, A_{\ell+s})$ is a nomincreasing sequence due to the greedy labeling of the triangles in $T$, and we also have that $|A_{\ell+s}|>1$ since $F_{\ell+s}$, the last fan appearing in $\mathcal{F}(T)$, must contain more than 1 triangle.
Finally, the number of triangles in $T$ is $n-2$.} Thus, $\ts(T)\in \Skyt(n-2-s-2t,t-1,s+2)$.
\end{proof}

\begin{example}
For example, given the triangulation in $\Tri(12,3,2)$ below, label the vertices in counter-clockwise order. 

\begin{center}

\begin{tikzpicture}[scale=1]
\node[regular polygon,regular polygon sides=12,minimum size=7cm,draw,rotate=30] at (.74,.76) (e){};

\filldraw[color=orange!1] (e.corner 1)--(e.corner 2)--(e.corner 12)--(e.corner 1);

\draw[thick] (e.corner 1)--(e.corner 2)--(e.corner 12)--(e.corner 1);

\filldraw[color=orange!15] (e.corner 2)--(e.corner 12)--(e.corner 4)--(e.corner 3)--(e.corner 2);
\draw[thick] (e.corner 2)--(e.corner 12)--(e.corner 4)--(e.corner 3)--(e.corner 2);

\filldraw[color=orange!35] (e.corner 4)--(e.corner 5)--(e.corner 12)--(e.corner 4);
\draw[thick] (e.corner 4)--(e.corner 5)--(e.corner 12)--(e.corner 4);

\filldraw[color=orange!70] (e.corner 6)--(e.corner 5)--(e.corner 12)--(e.corner 11)--(e.corner 7)--(e.corner 6);
\draw[thick] (e.corner 6)--(e.corner 5)--(e.corner 12)--(e.corner 11)--(e.corner 7)--(e.corner 6);

\filldraw[color=orange!100] (e.corner 7)--(e.corner 8)--(e.corner 9)--(e.corner 10)--(e.corner 11)--(e.corner 7);
\draw[thick] (e.corner 7)--(e.corner 8)--(e.corner 9)--(e.corner 10)--(e.corner 11)--(e.corner 7);

\node[regular polygon,regular polygon sides=12,minimum size=7cm,draw,rotate=30] at (.74,.76) (e){};

\foreach \x in {1,...,12}{\node[circle,fill,inner sep=1.5pt] at (e.corner \x) {};}

\foreach \x in {1,...,3}{\node[left] at (e.corner \x) {\x};}

\foreach \x in {4,...,7}{\node[below] at (e.corner \x) {\x};}

\foreach \x in {8,...,12}{\node[right] at (e.corner \x) {\x};}

{\draw (e.corner 2) -- (e.corner 12) {};}
{\draw (e.corner 2) -- (e.corner 4) {};}
{\draw (e.corner 4) -- (e.corner 12) {};}
{\draw (e.corner 5) -- (e.corner 12) {};}
{\draw (e.corner 5) -- (e.corner 7) {};}
{\draw (e.corner 6) -- (e.corner 7) {};}
{\draw (e.corner 7) -- (e.corner 10) {};}
{\draw (e.corner 7) -- (e.corner 9) {};}
{\draw (e.corner 5) -- (e.corner 11) {};}
{\draw (e.corner 7) -- (e.corner 11) {};}

\filldraw [white] (e.corner 1) circle (1.25pt);
%\node[above] at (e.corner 1) {$x_1$};
\filldraw [white] (e.corner 2) circle (1.25pt);
%\node[above] at (e.corner 2) {$x_2$};
\filldraw [white] (e.corner 4) circle (1.25pt);
%\node[left] at (e.corner 4) {$x_3$};
\filldraw [white] (e.corner 5) circle (1.25pt);
%\node[left] at (e.corner 5) {$x_4$};
\filldraw [white] (e.corner 7) circle (1.25pt);
%\node[below] at (e.corner 7) {$x_5$};
\end{tikzpicture}

\end{center}
We now label a single triangle in each fan with the label of the corresponding origin. 

\begin{center}

\begin{tikzpicture}[scale=.9]
\node[regular polygon,regular polygon sides=12,minimum size=7cm,draw,rotate=30] at (.74,.76) (e){};

\filldraw[color=orange!1] (e.corner 1)--(e.corner 2)--(e.corner 12)--(e.corner 1);

\draw[thick] (e.corner 1)--(e.corner 2)--(e.corner 12)--(e.corner 1);

\filldraw[color=orange!15] (e.corner 2)--(e.corner 12)--(e.corner 4)--(e.corner 3)--(e.corner 2);
\draw[thick] (e.corner 2)--(e.corner 12)--(e.corner 4)--(e.corner 3)--(e.corner 2);

\filldraw[color=orange!35] (e.corner 4)--(e.corner 5)--(e.corner 12)--(e.corner 4);
\draw[thick] (e.corner 4)--(e.corner 5)--(e.corner 12)--(e.corner 4);

\filldraw[color=orange!70] (e.corner 6)--(e.corner 5)--(e.corner 12)--(e.corner 11)--(e.corner 7)--(e.corner 6);
\draw[thick] (e.corner 6)--(e.corner 5)--(e.corner 12)--(e.corner 11)--(e.corner 7)--(e.corner 6);

\filldraw[color=orange!100] (e.corner 7)--(e.corner 8)--(e.corner 9)--(e.corner 10)--(e.corner 11)--(e.corner 7);
\draw[thick] (e.corner 7)--(e.corner 8)--(e.corner 9)--(e.corner 10)--(e.corner 11)--(e.corner 7);

\node[regular polygon,regular polygon sides=12,minimum size=7cm,draw,rotate=30] at (.74,.76) (e){};

\foreach \x in {1,...,12}{\node[circle,fill,inner sep=1.5pt] at (e.corner \x) {};}

\node[above] at (e.corner 1) {1};
\node[above] at (e.corner 12) {12};
\foreach \x in {2,...,3}{\node[left] at (e.corner \x) {\x};}

\foreach \x in {4,...,7}{\node[below] at (e.corner \x) {\x};}

\foreach \x in {8,...,11}{\node[right] at (e.corner \x) {\x};}

{\draw (e.corner 2) -- (e.corner 12) {};}
{\draw (e.corner 2) -- (e.corner 4) {};}
{\draw (e.corner 4) -- (e.corner 12) {};}
{\draw (e.corner 5) -- (e.corner 12) {};}
{\draw (e.corner 5) -- (e.corner 7) {};}
{\draw (e.corner 6) -- (e.corner 7) {};}
{\draw (e.corner 7) -- (e.corner 10) {};}
{\draw (e.corner 7) -- (e.corner 9) {};}
{\draw (e.corner 5) -- (e.corner 11) {};}
{\draw (e.corner 7) -- (e.corner 11) {};}

\node at (0.1,4.3) {\textbf{1}};
\node at (-1,3) {\textbf{2}};
\node at (-1.3,.5) {\textbf{4}};
\node at (0.8,1.5) {\textbf{5}};
\node at (3.5,1.2) {\textbf{7}};

\filldraw [white] (e.corner 1) circle (1.25pt);
%\node[above] at (e.corner 1) {$x_1$};
\filldraw [white] (e.corner 2) circle (1.25pt);
%\node[above] at (e.corner 2) {$x_2$};
\filldraw [white] (e.corner 4) circle (1.25pt);
%\node[left] at (e.corner 4) {$x_3$};
\filldraw [white] (e.corner 5) circle (1.25pt);
%\node[left] at (e.corner 5) {$x_4$};
\filldraw [white] (e.corner 7) circle (1.25pt);
%\node[below] at (e.corner 7) {$x_5$};

\end{tikzpicture}

\end{center}

We now label the remaining triangles greedily in the order that fans appear in $\mathcal{F}(T)$.

\begin{center}
\begin{tikzpicture}[scale=.9]
\node[regular polygon,regular polygon sides=12,minimum size=7cm,draw,rotate=30] at (.74,.76) (e){};

\filldraw[color=orange!1] (e.corner 1)--(e.corner 2)--(e.corner 12)--(e.corner 1);

\draw[thick] (e.corner 1)--(e.corner 2)--(e.corner 12)--(e.corner 1);

\filldraw[color=orange!15] (e.corner 2)--(e.corner 12)--(e.corner 4)--(e.corner 3)--(e.corner 2);
\draw[thick] (e.corner 2)--(e.corner 12)--(e.corner 4)--(e.corner 3)--(e.corner 2);

\filldraw[color=orange!35] (e.corner 4)--(e.corner 5)--(e.corner 12)--(e.corner 4);
\draw[thick] (e.corner 4)--(e.corner 5)--(e.corner 12)--(e.corner 4);

\filldraw[color=orange!70] (e.corner 6)--(e.corner 5)--(e.corner 12)--(e.corner 11)--(e.corner 7)--(e.corner 6);
\draw[thick] (e.corner 6)--(e.corner 5)--(e.corner 12)--(e.corner 11)--(e.corner 7)--(e.corner 6);

\filldraw[color=orange!100] (e.corner 7)--(e.corner 8)--(e.corner 9)--(e.corner 10)--(e.corner 11)--(e.corner 7);
\draw[thick] (e.corner 7)--(e.corner 8)--(e.corner 9)--(e.corner 10)--(e.corner 11)--(e.corner 7);

\node[regular polygon,regular polygon sides=12,minimum size=7cm,draw,rotate=30] at (.74,.76) (e){};

\foreach \x in {1,...,12}{\node[circle,fill,inner sep=1.5pt] at (e.corner \x) {};}

\node[above] at (e.corner 1) {1};
\node[above] at (e.corner 12) {12};
\foreach \x in {2,...,3}{\node[left] at (e.corner \x) {\x};}

\foreach \x in {4,...,7}{\node[below] at (e.corner \x) {\x};}

\foreach \x in {8,...,11}{\node[right] at (e.corner \x) {\x};}

{\draw (e.corner 2) -- (e.corner 12) {};}
{\draw (e.corner 2) -- (e.corner 4) {};}
{\draw (e.corner 4) -- (e.corner 12) {};}
{\draw (e.corner 5) -- (e.corner 12) {};}
{\draw (e.corner 5) -- (e.corner 7) {};}
{\draw (e.corner 6) -- (e.corner 7) {};}
{\draw (e.corner 7) -- (e.corner 10) {};}
{\draw (e.corner 7) -- (e.corner 9) {};}
{\draw (e.corner 5) -- (e.corner 11) {};}
{\draw (e.corner 7) -- (e.corner 11) {};}

\node at (0.1,4.3) {\textbf{1}};
\node at (-1,3) {\textbf{2}};
\node at (-2.7,1.9) {\textbf{3}};
\node at (-1.3,.5) {\textbf{4}};
\node at (0.8,1.5) {\textbf{5}};
\node at (0.8,-.9) {\textbf{6}};
\node at (-0.2,-2.7) {\textbf{8}};
\node at (3.5,1.2) {\textbf{7}};
\node at (3.5,-.7) {\textbf{9}};
\node at (3.2,-1.9) {\textbf{10}};

\filldraw [white] (e.corner 1) circle (1.25pt);
%\node[above] at (e.corner 1) {$x_1$};
\filldraw [white] (e.corner 2) circle (1.25pt);
%\node[above] at (e.corner 2) {$x_2$};
\filldraw [white] (e.corner 4) circle (1.25pt);
%\node[left] at (e.corner 4) {$x_3$};
\filldraw [white] (e.corner 5) circle (1.25pt);
%\node[left] at (e.corner 5) {$x_4$};
\filldraw [white] (e.corner 7) circle (1.25pt);
%\node[below] at (e.corner 7) {$x_5$};
\end{tikzpicture}

\end{center}

\editone{Hence, $A_1=\{1\}$, $A_2=\{2,3\}$, $A_3=\{4\}$, $A_4=\{5,6,8\}$, and $A_5=\{7,9,10\}$. This gives the following tableaux.}

\begin{center}

\begin{ytableau}
       \none & \none & 1 \\
       \none & \none & 4 \\
       2 &5 & 7 \\
       3&6&9\\
       8&\none&\none\\
              10&\none&\none\\
\end{ytableau}

\end{center}

\end{example}

\section{Dissections and Triangulations}\label{sec:enumeration}

In this section, we construct a combinatorial bijection between $\Dis(n+2,i)$ and $\Tri(n+i+1,i+1,0)$ as mentioned in the discussion following Corollary \ref{cor:disec_bij}. 
%
%\begin{corollary}
%The following are equal.
%\begin{enumerate}\label{cor:disec_bij}
%\item The number of dissections of an $(n+2)$-gon with $i$ chords.
%\item $\#\Skyt(n-i+1,i,2)$.
%\item $\#\Dyck(n+i+1,i+1,0)$. That is, the number of Dyck paths of semilength $n+i+1$ with $i+1$ long ascents and no singleton ascents.
%\item $\#\Tri(n+i+3,i+1,0)$. That is, the number of triangulations of an $(n+i+3)$-gon with $i+1$ non-singleton fans.
%\end{enumerate}
%\end{corollary}
%
%We provide a direct combinatorial bijective proof for the (1)-(3) and (1)-(4) equalities.

\editone{First, we define the map from $\Tri(n+i+3,i+1,0)$ to $\Dis(n+2,i)$.}
\begin{Def} \label{def:tri_to_dis}
Let $T$ be a triangulation of an $(n+i+3)$-gon with $i+1$ non-singular fans and no singular fans. \editone{We know $\mathcal{F}(T)$ is of the form $\mathcal{F}(T)=(F_1,\dots, F_{i+1})$}. Remove the internal diagonals of $F_j$ in $T$ for each $j$, leaving us with exactly $i$ diagonals in $T$. Let $x_j$ be the origin of $F_j$. For each vertex $x_j$, let $y_j$ be the immediate vertex that follows $x_j$ counterclockwise. Note that it is possible to have $y_j=x_{j+1}$. Also, we always have that the $y_j$ is a vertex in $F_j$, since $(x_j,y_j)$ must bound a triangle, and by definition this triangle is a part of $F_j$. 
Contract each edge $(x_j,y_j)$, creating an $(n+2)$-gon. Note that the vertex labeled 1 will always be the origin of $F_1$, so consequently we always contract $(1,2)$. Let 1 be the label of the new vertex after contracting this edge. Relabel the vertices in increasing counterclockwise order, starting at the original vertex 1.  
 Since no fan of $T$ was singular, it must be that the contractions preserved all $i$ diagonals, giving us a dissection in $\Dis(n+2,i)$.
\end{Def}

\editone{Now we define the map inverse to the one given above in definition \ref{def:tri_to_dis}, which is a map from $\Dis(n+2,i)$ to $\Tri(n+i+3,i+1,0)$.}

\begin{Def}\label{def:dis_to_tri}
For the reverse map, let $D$ be a dissection of an $(n+2)$-gon with $i$ chords, say $c_1,c_2,\dots, c_i$. We assume, as with triangulations, that the vertices of $D$ are already labeled with the numbers 1 through $n+2$.
%We will construct an ordered list $O$ of vertices. 
\editone{We will describe a process which allows us to add new vertices and edges to $D$.} Let $1'$ be a new vertex so that $(1',1)$ is an edge and $(1',2)$ is an edge. Delete the edge $(1,2)$. If $1$ was incident to more than one chords, shift all chords that do not form a triangle with the edge $(1,n+2)$ so that they are incident with $1'$ instead of $2$. 
Now, let $x$ be the next vertex counterclockwise to $1$ incident to a chord. (Note it may be that $x=1'$.) Proceed with the following procedure. 
%Let $x$ be the first vertex counterclockwise to $1$ incident to a chord. Follow the procedure below.
\begin{enumerate}
\item Let $c_{j_1},c_{j_2},\dots, c_{j_k}$ be the list of chords incident with $x$. 
%\item Append $x$ to the end of $O$.
\item Let $z$ be the vertex immediately counterclockwise of $x$. Remove the edge $(x,z)$ and add a new vertex $x'$ along with edges $(x,x')$ and $(x',z)$. 
\item If $x$ is adjacent to exactly one chord, continue to step (5). Otherwise, let $y$ be the vertex immediately clockwise to $x$. The edge $(y,x)$ bounds a closed region which contains exactly one chord $c_{j_\ell}$. For each $c_{j_m}$ with $m\neq \ell$, change its incidence with $x$ to an incidence with $x'$. 
\item If, after doing the prior step, we add a boundary edge to a region that we have already added a boundary edge to, undo the prior step and continue to the next step.
\item Move to the next vertex counterclockwise to $x$ incident to some chord, calling this new vertex $x$. (Note this new vertex may be the vertex $x'$ constructed in step (2).) If $x$ is a vertex we have already visited before, terminate the procedure. Otherwise, restart at step (1). 
\end{enumerate}
After doing this, observe that no region is a triangle. Also observe that we added a single edge to the boundary for each region (hence the importance of step (4)), and so we now have an $(n+i+3)$-gon. Relabel the vertices, starting at the vertex labeled 1 and continuing counterclockwise. We can decompose our new polygon into $i+1$ regions, labeled $P_1,P_2,\dots, P_{i+1}$. Make each of these fans so that the origin of $P_j$ is the vertex with the minimum label of $P_j$. This gives us a triangulation of an $(n+i+3)$-gon with $i+1$ non-singular fans and no singular fans.
\end{Def}

\begin{Rmk}
There are a couple of things to keep in mind that may help justify why the maps given in Definitions \ref{def:tri_to_dis} to \ref{def:dis_to_tri} are mutual inverses.
\begin{enumerate}
\item The edges we contract going from a triangulation to a dissection are exactly the edges we add back going from a dissection to a triangulation. This is because the origins of fans in triangulations are always chosen by the smallest vertex appearing in a fan, which appear sooner traveling counterclockwise around the polygons than vertices with larger labels. The regions in a dissection are ultimately what become our fans for a triangulation, so we consequently always add an edge on the boundary of a dissection right after the vertex that would end up being the origin for a fan.

\item The chords of a dissection should be viewed as the parts of the triangulation that ultimately form the boundaries of the fans (along with the actual boundary of the polygon). Hence, we can not expect two such chords to remain incident in the triangulation, as this would alter the number of fans. 
\end{enumerate}
\end{Rmk}

See Figure \ref{fig:tri_to_disec} below to see an illustration of the map from a triangulation to a dissection and Figure \ref{fig:dissec_to_tri} to see an illustration of the map of the other direction.

\newpage

\begin{figure}[h]

\begin{minipage}{.45\textwidth}
(a)
\begin{center}
\begin{tikzpicture}[scale=1]
\node[regular polygon,regular polygon sides=16,minimum size=5cm,draw,rotate=22.5] at (.74,.76) (e){};

\filldraw[color=orange!0] (e.corner 1)--(e.corner 16)--(e.corner 5)--(e.corner 4)--(e.corner 3)--(e.corner 2)--(e.corner 1);
\draw[thick] (e.corner 1)--(e.corner 16)--(e.corner 5)--(e.corner 4)--(e.corner 3)--(e.corner 2)--(e.corner 1);

\filldraw[color=orange!15] (e.corner 5)--(e.corner 16)--(e.corner 12)--(e.corner 6)--(e.corner 5);
\draw[thick] (e.corner 5)--(e.corner 16)--(e.corner 12)--(e.corner 6)--(e.corner 5);

\filldraw[color=orange!35] (e.corner 6)--(e.corner 7)--(e.corner 10)--(e.corner 11)--(e.corner 12)--(e.corner 6);
\draw[thick] (e.corner 6)--(e.corner 7)--(e.corner 10)--(e.corner 11)--(e.corner 12)--(e.corner 6);

\filldraw[color=orange!70] (e.corner 7)--(e.corner 8)--(e.corner 9)--(e.corner 10)--(e.corner 7);
\draw[thick] (e.corner 7)--(e.corner 8)--(e.corner 9)--(e.corner 10)--(e.corner 7);

\filldraw[color=orange!100] (e.corner 12)--(e.corner 13)--(e.corner 14)--(e.corner 15)--(e.corner 16)--(e.corner 12);
\draw[thick] (e.corner 12)--(e.corner 13)--(e.corner 14)--(e.corner 15)--(e.corner 16)--(e.corner 12);

\node[regular polygon,regular polygon sides=16,minimum size=5cm,draw,rotate=22.5] at (.74,.76) (e){};

\foreach \x in {1,...,16}{\node[circle,fill,inner sep=1.5pt] at (e.corner \x) {};}

\node[above] at (e.corner 1) {$1$};
\node[above] at (e.corner 2) {$2$};
\node[left] at (e.corner 3) {$3$};
\node[left] at (e.corner 4) {$4$};
\node[left] at (e.corner 5) {$5$};
\node[left] at (e.corner 6) {$6$};
\node[below] at (e.corner 7) {$7$};
\node[below] at (e.corner 8) {$8$};
\node[below] at (e.corner 9) {$9$};
\node[below] at (e.corner 10) {$10$};
\node[right] at (e.corner 11) {$11$};
\node[right] at (e.corner 12) {$12$};
\node[right] at (e.corner 13) {$13$};
\node[above] at (e.corner 14) {$14$};
\node[above] at (e.corner 15) {$15$};
\node[above] at (e.corner 16) {$16$};

{\draw (e.corner 1) -- (e.corner 3) {};}
{\draw (e.corner 1) -- (e.corner 5) {};}
{\draw (e.corner 1) -- (e.corner 4) {};}
{\draw (e.corner 1) -- (e.corner 16) {};}
{\draw (e.corner 5) -- (e.corner 16) {};}
{\draw (e.corner 5) -- (e.corner 12) {};}
{\draw (e.corner 7) -- (e.corner 10) {};}
{\draw (e.corner 9) -- (e.corner 7) {};}
{\draw (e.corner 6) -- (e.corner 12) {};}
{\draw (e.corner 6) -- (e.corner 11) {};}
{\draw (e.corner 6) -- (e.corner 10) {};}
{\draw (e.corner 13) -- (e.corner 12) {};}
{\draw (e.corner 14) -- (e.corner 12) {};}
{\draw (e.corner 16) -- (e.corner 12) {};}
{\draw (e.corner 15) -- (e.corner 12) {};}

\filldraw [white] (e.corner 1) circle (1.25pt);
\filldraw [white] (e.corner 5) circle (1.25pt);
\filldraw [white] (e.corner 6) circle (1.25pt);
\filldraw [white] (e.corner 7) circle (1.25pt);
\filldraw [white] (e.corner 12) circle (1.25pt);
\end{tikzpicture}
\end{center}
\end{minipage}\hspace{.5in}
\begin{minipage}{.45\textwidth}
(b)
\begin{center}
\begin{tikzpicture}[scale=1]
\node[regular polygon,regular polygon sides=16,minimum size=5cm,draw,rotate=22.5] at (.74,.76) (e){};

\node[regular polygon,regular polygon sides=16,minimum size=5cm,draw,rotate=22.5] at (.74,.76) (e){};

\filldraw[color=orange!0] (e.corner 1)--(e.corner 16)--(e.corner 5)--(e.corner 4)--(e.corner 3)--(e.corner 2)--(e.corner 1);
\draw[thick] (e.corner 1)--(e.corner 16)--(e.corner 5)--(e.corner 4)--(e.corner 3)--(e.corner 2)--(e.corner 1);

\filldraw[color=orange!15] (e.corner 5)--(e.corner 16)--(e.corner 12)--(e.corner 6)--(e.corner 5);
\draw[thick] (e.corner 5)--(e.corner 16)--(e.corner 12)--(e.corner 6)--(e.corner 5);

\filldraw[color=orange!35] (e.corner 6)--(e.corner 7)--(e.corner 10)--(e.corner 11)--(e.corner 12)--(e.corner 6);
\draw[thick] (e.corner 6)--(e.corner 7)--(e.corner 10)--(e.corner 11)--(e.corner 12)--(e.corner 6);

\filldraw[color=orange!70] (e.corner 7)--(e.corner 8)--(e.corner 9)--(e.corner 10)--(e.corner 7);
\draw[thick] (e.corner 7)--(e.corner 8)--(e.corner 9)--(e.corner 10)--(e.corner 7);

\filldraw[color=orange!100] (e.corner 12)--(e.corner 13)--(e.corner 14)--(e.corner 15)--(e.corner 16)--(e.corner 12);
\draw[thick] (e.corner 12)--(e.corner 13)--(e.corner 14)--(e.corner 15)--(e.corner 16)--(e.corner 12);

\node[regular polygon,regular polygon sides=16,minimum size=5cm,draw,rotate=22.5] at (.74,.76) (e){};

\foreach \x in {1,...,16}{\node[circle,fill,inner sep=1.5pt] at (e.corner \x) {};}

\node[above] at (e.corner 1) {$1$};
\node[above] at (e.corner 2) {$2$};
\node[left] at (e.corner 3) {$3$};
\node[left] at (e.corner 4) {$4$};
\node[left] at (e.corner 5) {$5$};
\node[left] at (e.corner 6) {$6$};
\node[below] at (e.corner 7) {$7$};
\node[below] at (e.corner 8) {$8$};
\node[below] at (e.corner 9) {$9$};
\node[below] at (e.corner 10) {$10$};
\node[right] at (e.corner 11) {$11$};
\node[right] at (e.corner 12) {$12$};
\node[right] at (e.corner 13) {$13$};
\node[above] at (e.corner 14) {$14$};
\node[above] at (e.corner 15) {$15$};
\node[above] at (e.corner 16) {$16$};

{\draw (e.corner 5) -- (e.corner 16) {};}
{\draw (e.corner 6) -- (e.corner 12) {};}
{\draw (e.corner 7) -- (e.corner 10) {};}
{\draw (e.corner 16) -- (e.corner 12) {};}

\filldraw [white] (e.corner 1) circle (1.25pt);
\filldraw [white] (e.corner 5) circle (1.25pt);
\filldraw [white] (e.corner 6) circle (1.25pt);
\filldraw [white] (e.corner 7) circle (1.25pt);
\filldraw [white] (e.corner 12) circle (1.25pt);
\end{tikzpicture}
\end{center}
\end{minipage}

\vspace{.5in}

\begin{minipage}{.45\textwidth}
(c)
\begin{center}
\begin{tikzpicture}[scale=.3]
\node[regular polygon,regular polygon sides=16,minimum size=5cm,draw,rotate=22.5] at (.74,.76) (e){};

%\filldraw[color=black!0] (e.corner 1)--(e.corner 15)--(e.corner 5)--(e.corner 4)--(e.corner 3)--(e.corner 2)--(e.corner 1);
%
%\filldraw[color=black!20] (e.corner 5)--(e.corner 16)--(e.corner 11)--(e.corner 6)--(e.corner 5);
%
%
%\filldraw[color=black!40] (e.corner 6)--(e.corner 7)--(e.corner 8)--(e.corner 9)--(e.corner 10)--(e.corner 11)--(e.corner 6);
%
%\filldraw[color=black!60] (e.corner 11)--(e.corner 12)--(e.corner 13)--(e.corner 14)--(e.corner 15)--(e.corner 16)--(e.corner 11);

\node[regular polygon,regular polygon sides=16,minimum size=5cm,draw,rotate=22.5] at (.74,.76) (e){};

{\draw[thick,red] (e.corner 2) -- (e.corner 1) {};}
{\draw[thick,red] (e.corner 6) -- (e.corner 5) {};}
{\draw[thick,red] (e.corner 6) -- (e.corner 7) {};}
{\draw[thick,red] (e.corner 8) -- (e.corner 7) {};}
{\draw[thick,red] (e.corner 13) -- (e.corner 12) {};}

\foreach \x in {1,...,16}{\node[circle,fill,inner sep=1.5pt] at (e.corner \x) {};}

\node[above] at (e.corner 1) {$1$};
\node[above] at (e.corner 2) {$2$};
\node[left] at (e.corner 3) {$3$};
\node[left] at (e.corner 4) {$4$};
\node[left] at (e.corner 5) {$5$};
\node[left] at (e.corner 6) {$6$};
\node[below] at (e.corner 7) {$7$};
\node[below] at (e.corner 8) {$8$};
\node[below] at (e.corner 9) {$9$};
\node[below] at (e.corner 10) {$10$};
\node[right] at (e.corner 11) {$11$};
\node[right] at (e.corner 12) {$12$};
\node[right] at (e.corner 13) {$13$};
\node[above] at (e.corner 14) {$14$};
\node[above] at (e.corner 15) {$15$};
\node[above] at (e.corner 16) {$16$};

{\draw (e.corner 5) -- (e.corner 16) {};}
{\draw (e.corner 6) -- (e.corner 12) {};}
{\draw (e.corner 7) -- (e.corner 10) {};}
{\draw (e.corner 16) -- (e.corner 12) {};}
\end{tikzpicture}
\end{center}
\end{minipage}\hspace{.5in}
\begin{minipage}{.45\textwidth}
(d)
\begin{center}
\begin{tikzpicture}[scale=.3]
\node[regular polygon,regular polygon sides=11,minimum size=5cm,draw,rotate=30] at (.74,.76) (e){};

%\filldraw[color=black!0] (e.corner 1)--(e.corner 12)--(e.corner 5)--(e.corner 4)--(e.corner 3)--(e.corner 2)--(e.corner 1);
%
%\filldraw[color=black!20] (e.corner 5)--(e.corner 12)--(e.corner 9)--(e.corner 5);
%
%
%\filldraw[color=black!40] (e.corner 5)--(e.corner 6)--(e.corner 7)--(e.corner 8)--(e.corner 9)--(e.corner 5);
%
%\filldraw[color=black!60] (e.corner 9)--(e.corner 10)--(e.corner 11)--(e.corner 12)--(e.corner 9);

\node[regular polygon,regular polygon sides=11,minimum size=5cm,draw,rotate=30] at (.74,.76) (e){};

\foreach \x in {1,...,11}{\node[circle,fill,inner sep=1.5pt] at (e.corner \x) {};}

\node[above] at (e.corner 1) {$1$};
\node[left] at (e.corner 2) {$2$};
\node[left] at (e.corner 3) {$3$};
\node[left] at (e.corner 4) {$4$};
\node[below] at (e.corner 5) {$5$};
\node[below] at (e.corner 6) {$6$};
\node[right] at (e.corner 7) {$7$};
\node[right] at (e.corner 8) {$8$};
\node[right] at (e.corner 9) {$9$};
\node[above] at (e.corner 10) {$10$};
\node[above] at (e.corner 11) {$11$};

{\draw (e.corner 4) -- (e.corner 11) {};}
{\draw (e.corner 4) -- (e.corner 8) {};}
{\draw (e.corner 4) -- (e.corner 6) {};}
{\draw (e.corner 11) -- (e.corner 8) {};}
\end{tikzpicture}
\end{center}
\end{minipage}
\caption{The steps transforming an element of $\Tri(16,5,0)$, Figure \ref{fig:tri_to_disec}(a), to a dissection of a 11-gon with 4 chords, Figure \ref{fig:tri_to_disec}(d). The fans of (a) are colored different shades of gray, and we omit this shading after the fans no longer become relevant in part (c). In (c), the red boundary edges (1,2), (5,6), (6,7), (7,8), and (12,13) are the boundary edges that get contracted. The vertex labeled $1$ in (d) is the vertex labeled $2$ in the other parts.}
\label{fig:tri_to_disec}
\end{figure}

\newpage

\begin{figure}[h]

\begin{minipage}{.45\textwidth}
(a)
\begin{center}

\begin{tikzpicture}[scale=.3]
\node[regular polygon,regular polygon sides=11,minimum size=5cm,draw,rotate=16] at (.74,.76) (e){};

%\filldraw[color=black!0] (e.corner 1)--(e.corner 12)--(e.corner 5)--(e.corner 4)--(e.corner 3)--(e.corner 2)--(e.corner 1);
%
%\filldraw[color=black!20] (e.corner 5)--(e.corner 12)--(e.corner 9)--(e.corner 5);
%
%
%\filldraw[color=black!40] (e.corner 5)--(e.corner 6)--(e.corner 7)--(e.corner 8)--(e.corner 9)--(e.corner 5);
%
%\filldraw[color=black!60] (e.corner 9)--(e.corner 10)--(e.corner 11)--(e.corner 12)--(e.corner 9);

\node[regular polygon,regular polygon sides=11,minimum size=5cm,draw,rotate=16] at (.74,.76) (e){};

\foreach \x in {1,...,11}{\node[circle,fill,inner sep=1.5pt] at (e.corner \x) {};}

\node[above] at (e.corner 1) {$1$};
\node[left] at (e.corner 2) {$2$};
\node[left] at (e.corner 3) {$3$};
\node[left] at (e.corner 4) {$4$};
\node[below] at (e.corner 5) {$5$};
\node[below] at (e.corner 6) {$6$};
\node[right] at (e.corner 7) {$7$};
\node[right] at (e.corner 8) {$8$};
\node[right] at (e.corner 9) {$9$};
\node[above] at (e.corner 10) {$10$};
\node[above] at (e.corner 11) {$11$};

{\draw (e.corner 4) -- (e.corner 11) {};}
{\draw (e.corner 4) -- (e.corner 8) {};}
{\draw (e.corner 4) -- (e.corner 6) {};}
{\draw (e.corner 11) -- (e.corner 8) {};}
\end{tikzpicture}
\end{center}
\end{minipage}\hspace{.5in}
\begin{minipage}{.45\textwidth}
(b)
\begin{center}

\begin{tikzpicture}[scale=.3]
\node[regular polygon,regular polygon sides=13,minimum size=5cm,draw,rotate=15] at (.74,.76) (e){};

%\filldraw[color=black!0] (e.corner 1)--(e.corner 12)--(e.corner 5)--(e.corner 4)--(e.corner 3)--(e.corner 2)--(e.corner 1);
%
%\filldraw[color=black!20] (e.corner 5)--(e.corner 12)--(e.corner 9)--(e.corner 5);
%
%
%\filldraw[color=black!40] (e.corner 5)--(e.corner 6)--(e.corner 7)--(e.corner 8)--(e.corner 9)--(e.corner 5);
%
%\filldraw[color=black!60] (e.corner 9)--(e.corner 10)--(e.corner 11)--(e.corner 12)--(e.corner 9);

\node[regular polygon,regular polygon sides=13,minimum size=5cm,draw,rotate=15] at (.74,.76) (e){};

\foreach \x in {1,...,13}{\node[circle,fill,inner sep=1.5pt] at (e.corner \x) {};}

\node[above] at (e.corner 1) {$1$};
\node[above] at (e.corner 2) {$1'$};
\node[left] at (e.corner 3) {$2$};
\node[left] at (e.corner 4) {$3$};
\node[left] at (e.corner 5) {$4$};
\node[below] at (e.corner 6) {$4'$};
\node[below] at (e.corner 7) {$5$};
\node[below] at (e.corner 8) {$6$};
\node[right] at (e.corner 9) {$7$};
\node[right] at (e.corner 10) {$8$};
\node[right] at (e.corner 11) {$9$};
\node[above] at (e.corner 12) {$10$};
\node[above] at (e.corner 13) {$11$};

{\draw (e.corner 5) -- (e.corner 13) {};}
{\draw (e.corner 6) -- (e.corner 10) {};}
{\draw (e.corner 6) -- (e.corner 8) {};}
{\draw (e.corner 13) -- (e.corner 10) {};}
\end{tikzpicture}
\end{center}
\end{minipage}

\vspace{.5in}

\begin{minipage}{.45\textwidth}
(c)

\begin{center}
\begin{tikzpicture}[scale=.3]
\node[regular polygon,regular polygon sides=16,minimum size=5cm,draw,rotate=22.5] at (.74,.76) (e){};

%\filldraw[color=black!0] (e.corner 1)--(e.corner 12)--(e.corner 5)--(e.corner 4)--(e.corner 3)--(e.corner 2)--(e.corner 1);
%
%\filldraw[color=black!20] (e.corner 5)--(e.corner 12)--(e.corner 9)--(e.corner 5);
%
%
%\filldraw[color=black!40] (e.corner 5)--(e.corner 6)--(e.corner 7)--(e.corner 8)--(e.corner 9)--(e.corner 5);
%
%\filldraw[color=black!60] (e.corner 9)--(e.corner 10)--(e.corner 11)--(e.corner 12)--(e.corner 9);

\node[regular polygon,regular polygon sides=16,minimum size=5cm,draw,rotate=22.5] at (.74,.76) (e){};

\foreach \x in {1,...,16}{\node[circle,fill,inner sep=1.5pt] at (e.corner \x) {};}

\node[above] at (e.corner 1) {$1$};
\node[above] at (e.corner 2) {$1'$};
\node[left] at (e.corner 3) {$2$};
\node[left] at (e.corner 4) {$3$};
\node[left] at (e.corner 5) {$4$};
\node[below] at (e.corner 6) {$4'$};
\node[below] at (e.corner 7) {$4''$};
\node[below] at (e.corner 8) {$4'''$};
\node[below] at (e.corner 9) {$5$};
\node[below] at (e.corner 10) {$6$};
\node[right] at (e.corner 11) {$7$};
\node[right] at (e.corner 12) {$8$};
\node[right] at (e.corner 13) {$8'$};
\node[right] at (e.corner 14) {$9$};
\node[above] at (e.corner 15) {$10$};
\node[above] at (e.corner 16) {$11$};

{\draw (e.corner 5) -- (e.corner 16) {};}
{\draw (e.corner 6) -- (e.corner 12) {};}
{\draw (e.corner 7) -- (e.corner 10) {};}
{\draw (e.corner 16) -- (e.corner 12) {};}
\end{tikzpicture}
\end{center}
\end{minipage}\hspace{.5in}
\begin{minipage}{.45\textwidth}
(d)

\begin{center}
\begin{tikzpicture}[scale=1	]
\node[regular polygon,regular polygon sides=16,minimum size=5cm,draw,rotate=22.5] at (.74,.76) (e){};

\filldraw[color=orange!0] (e.corner 1)--(e.corner 16)--(e.corner 5)--(e.corner 4)--(e.corner 3)--(e.corner 2)--(e.corner 1);
\draw[thick] (e.corner 1)--(e.corner 16)--(e.corner 5)--(e.corner 4)--(e.corner 3)--(e.corner 2)--(e.corner 1);

\filldraw[color=orange!15] (e.corner 5)--(e.corner 16)--(e.corner 12)--(e.corner 6)--(e.corner 5);
\draw[thick] (e.corner 5)--(e.corner 16)--(e.corner 12)--(e.corner 6)--(e.corner 5);

\filldraw[color=orange!35] (e.corner 6)--(e.corner 7)--(e.corner 10)--(e.corner 11)--(e.corner 12)--(e.corner 6);
\draw[thick] (e.corner 6)--(e.corner 7)--(e.corner 10)--(e.corner 11)--(e.corner 12)--(e.corner 6);

\filldraw[color=orange!70] (e.corner 7)--(e.corner 8)--(e.corner 9)--(e.corner 10)--(e.corner 7);
\draw[thick] (e.corner 7)--(e.corner 8)--(e.corner 9)--(e.corner 10)--(e.corner 7);

\filldraw[color=orange!100] (e.corner 12)--(e.corner 13)--(e.corner 14)--(e.corner 15)--(e.corner 16)--(e.corner 12);
\draw[thick] (e.corner 12)--(e.corner 13)--(e.corner 14)--(e.corner 15)--(e.corner 16)--(e.corner 12);

\node[regular polygon,regular polygon sides=16,minimum size=5cm,draw,rotate=22.5] at (.74,.76) (e){};

\foreach \x in {1,...,16}{\node[circle,fill,inner sep=1.5pt] at (e.corner \x) {};}

\node[above] at (e.corner 1) {$1$};
\node[above] at (e.corner 2) {$2$};
\node[left] at (e.corner 3) {$3$};
\node[left] at (e.corner 4) {$4$};
\node[left] at (e.corner 5) {$5$};
\node[left] at (e.corner 6) {$6$};
\node[below] at (e.corner 7) {$7$};
\node[below] at (e.corner 8) {$8$};
\node[below] at (e.corner 9) {$9$};
\node[below] at (e.corner 10) {$10$};
\node[right] at (e.corner 11) {$11$};
\node[right] at (e.corner 12) {$12$};
\node[right] at (e.corner 13) {$13$};
\node[above] at (e.corner 14) {$14$};
\node[above] at (e.corner 15) {$15$};
\node[above] at (e.corner 16) {$16$};

{\draw (e.corner 1) -- (e.corner 3) {};}
{\draw (e.corner 1) -- (e.corner 5) {};}
{\draw (e.corner 1) -- (e.corner 4) {};}
{\draw (e.corner 1) -- (e.corner 16) {};}
{\draw (e.corner 5) -- (e.corner 16) {};}
{\draw (e.corner 5) -- (e.corner 12) {};}
{\draw (e.corner 7) -- (e.corner 10) {};}
{\draw (e.corner 9) -- (e.corner 7) {};}
{\draw (e.corner 6) -- (e.corner 12) {};}
{\draw (e.corner 6) -- (e.corner 11) {};}
{\draw (e.corner 6) -- (e.corner 10) {};}
{\draw (e.corner 13) -- (e.corner 12) {};}
{\draw (e.corner 14) -- (e.corner 12) {};}
{\draw (e.corner 16) -- (e.corner 12) {};}
{\draw (e.corner 15) -- (e.corner 12) {};}

\filldraw [white] (e.corner 1) circle (1.25pt);
\filldraw [white] (e.corner 5) circle (1.25pt);
\filldraw [white] (e.corner 6) circle (1.25pt);
\filldraw [white] (e.corner 7) circle (1.25pt);
\filldraw [white] (e.corner 12) circle (1.25pt);
\end{tikzpicture}
\end{center}
\end{minipage}
\caption{The steps transforming an a dissection of a 11-gon with 4 chords, Figure \ref{fig:dissec_to_tri}(a), to an element of $\Tri(16,5,0)$, Figure \ref{fig:dissec_to_tri}(d). In (b), we see the initial step of adding vertex $1'$ and an application of step (3) from the description of the combinatorial bijection for Corollary \ref{cor:disec_bij} (1)-(4), requiring us to change an adjacency of a chord. In (c), we then do this a second time to vertex $4'$ (as it is still adjacent to multiple chords) and add the remaining vertices as described in the map. Adding chords to the vertex with minimum label in each part gives us the triangulation in (d).}
\label{fig:dissec_to_tri}
\end{figure}

\bibliographystyle{amsalpha}
% or use \bibliographystyle{abbrv}
\bibliography{./George_SuJi_bib}

\end{document}